\theoremstyle{plain}
\newtheorem{thm}{Theorem}[section]
\newtheorem{lem}[thm]{Lemma}
\newtheorem{cor}[thm]{Corollary}
\theoremstyle{definition}
\newtheorem{defi}[thm]{Definition}
\numberwithin{equation}{section}
\numberwithin{equation}{section}
\def\Z{{\mathbb Z}}
\def\ls{\lesssim}
\def\gs{\gtrsim}
\def\X{\overline{X}}
\def\t{\widetilde{t}}
\def\l{\overline{l}}
\def\Corb{\operatorname{Corb}}
\def\Orb{\operatorname{Orb}}
\newtoks\by
\newtoks\paper
\newtoks\book
\newtoks\jour
\newtoks\yr
\newtoks\pages
\newtoks\vol
\newtoks\publ
\def\ota{{\hbox\vol{???}}}
\def\cLear{\by=\ota\paper=\ota\book=\ota\jour=\ota\yr=\ota
\pages=\ota\vol=\ota\publ=\ota}
\def\endpaper{\the\by, \textit{\the\paper},
{\the\jour} \textbf{\the\vol} (\the\yr), \the\pages.\cLear}
\def\endbook{\the\by, \textit{\the\book}, \the\publ.\cLear}
\def\endprep{\the\by, \textit{\the\paper}, \the\jour.\cLear}
\def\name#1#2{#1 #2}
\def\et{ and }
\begin{document}

\title[On the stability of the Lions-Peetre method of real interpolation] {On the stability of the Lions-Peetre method of  real interpolation with functional parameter}
\author{A. Gogatishvili}
\address{Amiran Gogatishvili, Institute of Mathematics of the Czech  Academy of Sciences, \u Zitn\'a 25, 11567 Prague 1, Czech Republic}
\email{gogatish@math.cas.cz}
\thanks{The research  was partly supported by the grant P201-18-0580S of the Grant Agency of the Czech Republic  by RVO: 67985840 and  Shota Rustaveli National Science Foundation (SRNSF), grant no: FR17-589 }
\keywords{K-functional, real interpolation method, Banach space, quasi-concave function}
\subjclass[2010]{46B70,46M35, 46E30}

\begin{abstract}

Let $\vec{X}=(X_0, X_1)$ be a compatible couple of Banach spaces, $ 1\le p \le \infty$ and let $ \varphi$ be positive quasi-concave function. Denote by  $\overline{X}_{\varphi,p}=(X_0,X_1)_{\varphi,p}$ the real interpolation spaces defined by S. Janson (1981). We give necessary and sufficient conditions on $ \varphi_{0}$, $\varphi_{1}$ and $\varphi$ for the validity of
\begin{equation*}
	\left(\overline{X}_{\varphi_{0},1},\overline{X}_{\varphi_{1},1}
	\right) _{\varphi,p}= \left(\overline{X}_{\varphi_{0},\infty},\overline{X}_{\varphi_{1},\infty}\right)_{\varphi,p}
\end{equation*}
for all $ 1\le p\le \infty$, and all Banach couples $\overline{X}. $
\end{abstract}
\maketitle

\section{Introduction}
Let $\overline{X}=(X_0,X_1)$ be a compatible Banach couple. For $x\in \sum(  \overline{X}) =X_0+X_1$, 
 Peetre's  K-functional is defined by
$$ K(t,x, \overline{X})= \inf_ { x=x_0+x_1 , x_i\in X_i}
\left(\|x_0\|_{X_0}+t \|x_1\|_{X_1}\right) \quad t>0.
$$

Let $0<\theta<1$, $1\le p\le \infty$, then the Lions-Peetre spaces
$\overline{X}_{\theta,p}=(X_0,X_1)_{\theta,p}$   are defined using the norm
$$\|x\|_{\theta,p}=\left(\int^{\infty}_{0}\left(\frac{K(t,x, \overline{X})}{t^\theta}\right)^p\frac{dt}{t}\right)^{\frac{1}{p}}.
$$
 
One of the most important theoretical results for 
these spaces is the so-called  reiteration theorem, which claims that 
\begin{equation} \label{2} \left(\overline{X}_{\theta_0,p_0},\overline{X}_{\theta_1,p_1}\right)
_{\theta,p}=\overline{X}_{(1-\theta)\theta_0 +\theta \theta_1,p}, \quad \theta_0 \neq \theta_1.
\end{equation}

These definitions and properties can be found in any modern monograph on interpolation theory (e.g. \cite{BS}, \cite{BL} and \cite{T}).
The statement \eqref{2} is the  so called stability of the real method. The resulting space  on the  left-hand side  of \eqref{2} does not depend on $p_0$, $p_1$.

The definition of real interpolation method was extended in different directions by a number of authors. For example, one  can  replace the function $t^\theta$ in \eqref{2} by  a positive concave function $\varphi$, defined on $(0,\infty)$ (see T.F.Kalugina \cite{Kal}, J.Gustavsson \cite{Gu}, L.E.Persson \cite{P}). In \cite{J} S. Janson provided a different approach to these spaces using the discrete norm
$$\|x\|_{\varphi,p}=\left(\sum_{k\in \Z}\left(\frac{K(t_k,x, \overline{X})}{\varphi(t_k)}\right)^p\right)^{\frac{1}{p}},
$$
 where $\{t_k\}$ is a special discretizing sequence depending of $\varphi$ (see  Definition~\ref{ds}). In \cite[Theorem 19]{J} Janson  proved that, if $\varphi(\varphi_0, \varphi_1)$ and $\frac{\varphi_1}{\varphi_0}$ are quasi-power functions (see Definition~\ref{qpf}),  then the following  reiteration formula  holds for any $p_0, p_1, p\in [1,\infty]$, 
 \begin{align} \label{reit} \left(\overline{X}_{\varphi_0,p_0},\overline{X}_{\varphi_1,p_1}\right)
_{\varphi,p}=\overline{X}_{\varphi(\varphi_0,\varphi_1),p}\\
\left(\varphi(\varphi_0,\varphi_1)(t)=\varphi_0(t)\varphi\left(\frac{\varphi_1(t)}{\varphi_0(t)}\right)\right).\notag
\end{align}

 In \cite{Kr}, N.Krugljak gave  a necessary and sufficient condition
 on  $\varphi_0$, $\varphi_1$ and $\varphi$, so that \eqref{reit} is true for any choice of $p_0, p_1, p\in [1,\infty]$.     

A more general reiteration  theorem for a real interpolation method was obtained by S.Astashkin  \cite{As,As1},  Yu.A.Brudny\u\i and N.Ya.Krugljak \cite{BK}.

It is clear that, from \eqref{reit} we have
  \begin{equation} \label{stab} \left(\overline{X}_{\varphi_0,1},\overline{X}_{\varphi_1,1}\right)
_{\varphi,p}=\left(\overline{X}_{\varphi_0,\infty},\overline{X}_{\varphi_1,\infty}\right)_{\varphi,p}.
\end{equation}
 
 The inverse implication is not easy. The sufficient condition for \eqref{stab} was obtained by  E.Pustylnik \cite{Pu} and E.Semenov \cite{S} in case when $\varphi(t)=t^\theta$.  V.Ovchinnikov  \cite{Ov} offered  a new approach to study this problem.  Semenov and Ovchinnikov used Krugljak 's result \cite[Corollary 3]{AK}. Ovchinikov  only considers the case $\varphi(t)=t^\theta$.  In  this paper we are going extend  Ovchinnikov's theorem to te setting of non-degenerate quasi-concave function $\varphi$. In this context we will show (cf. Theorem~\ref{main})  that the reiteration theorem \eqref{reit}  follows  from the stability  theorem \eqref{stab}.  

In this paper we shall not consider the case of degenerate quasi-concave functions, which we leave as an open problem. We think that the study of the degenerate case  could be of interest to experts in Extrapolation Theory (see e.g. \cite{AsLyMi}. 
 
 
 We use the notation $A\ls B$ to indicate that  $A\le CB$ with some positive constant $C$ independent of appropriate quantities. If $A \ls B$  and $B\ls A$, we shall write $A\approx B$. 
 
 \section{Definitions and main result}
 We start with some basic definitions.
 
 \begin{defi} Let $\{a_k\}$ be a sequence of positive numbers. We shall say that $\{a_k\}$ is strongly  increasing (resp. strongly decreasing)  and write $a_{k}\uparrow \uparrow $ (resp.  $a_{k}\downarrow \downarrow $) if

\begin{equation*}
\underset{k\in \mathbb{Z}}{\inf }\frac{a_{k+1}}{a_{k}}\ge 2\quad \left(\text{ resp.  }\quad \underset{k\in \mathbb{Z}}{\sup }\frac{a_{k+1}}{a_{k}}\le\frac12,\right)
\end{equation*}

\end{defi} 

 \begin{defi} We shall say that  $\varphi$ is non-degenerate  quasi-concave function on $(0,\infty)$, if  $\varphi$ is non-decreasing and  $\frac{\varphi(t)}{t}$ is non-increasing on $(0,\infty)$ and, moreover,
\begin{equation}\lim_{t\to 0+}\varphi(t)=\lim_{t\to +\infty}\frac{\varphi(t)}{t}=\lim_{t\to 0+}\frac{t}{\varphi(t)}=\lim_{t\to +\infty}\frac{1}{\varphi(t)}=0.
\end{equation} 
\end{defi}
 
\begin{defi} \label{ds}
A strongly increasing sequence $\{t_k\}$ is a discretizing sequence for a  non-degenerate quasi-concave function $\varphi$ if 

i)  $\varphi(t_k)\uparrow\uparrow $ and $\frac{\varphi(t_{k+1})}{t_{k+1}}\downarrow\downarrow$ ;

ii) There exists a decomposition $\Z=\Z_1\cup\Z_2$, such that $\Z_1\cap\Z_2=\emptyset$, and 
\begin{align*}
\varphi(t_{k+1})& \le 2 \varphi(t_k) &\text{if}  \quad k\in \Z_1,\\ 
\frac{\varphi(t_k)}{t_k}& \le 2 \frac{\varphi(t_{k+1})}{t_{k+1}}
&\text{if}  \quad k\in \Z_2.
\end{align*} 
\end{defi}

Let us recall \cite[Lemma~2.7]{GP}, that if $\varphi$ is non-degenerate quasi-concave function  there always exists a discretizing sequence adapted to $\varphi$.  

 \begin{defi}\label{qpf} A quasi-concave function $\varphi$  is a quasi-power function ($\varphi\in P^{+-}$), whenever $s_{\varphi}(t)\rightarrow 0$ as $t\rightarrow 0$, $s_{\varphi}(t)\rightarrow \infty$ as $t\rightarrow \infty$, where $s_{\varphi}(t)=\sup_us_{\varphi}(ut)/s_{\varphi}(u) $  (cf. \cite{GuPe}). 
\end{defi}

It is known that (cf.  \cite{GuPe}), any quasi-power function $\varphi$ is equivalent to $t^{\theta_0}\psi(t^{\theta_1-\theta_0})$, for some quasi-concave function $\psi$ and $0<\theta_0,\theta_1<1$. If $\varphi$ is quasi-power function then  $\{2^k\}$ is a discretizing sequence for $\varphi$.

It is easy to see that if $\varphi_0$, $\varphi_1$ and $\varphi$ are non-degenerate positive quasi-concave functions on $(0,\infty)$, then the function $\varphi(\varphi_0,\varphi_1)(t)=\varphi_0(t)
\varphi(\frac{\varphi_1(t)}{\varphi_0(t)})$ is a non-degenerate quasi-concave function. Throughout  the paper the functions $\varphi_0$, $\varphi_1$ and $\varphi$ will be assumed to be non-degenerate positive quasi-concave functions on $(0,\infty)$ and  $\{ t_k\}$ (resp. $\{\widetilde t_k \}$) will denote the   discretizing sequence for $\varphi$ (resp. the discretizing sequence for  $\varphi(\varphi_0,\varphi_1)$).
 
\medskip
 Our main result now reads as  follows.

\begin{thm} \label{main} Let  $\varphi_0$, $\varphi_1$ and $\varphi$ be positive non-degenerate quasi-concave functions on $(0,\infty)$.  Let $\{ t_k\}$ be  discretizing sequence for  $\varphi$ and let  $\{\widetilde t_k \}$ be  discretizing sequence for  $\varphi(\varphi_0,\varphi_1)$. 
The following assertions are equivalent:

 {\rm(i)}  \ \eqref{stab} holds  for some $p\in [1,\infty]$;
 
 {\rm(ii)} \ \eqref{stab} holds for every $p\in [1,\infty]$;
 
 {\rm(iii)} \eqref{stab} holds for $\X=(L_1(0,\infty), L_\infty(0,\infty))$ and some $p\in [1,\infty]$;
 
	{\rm(iv)} \eqref{reit} holds for any  $p_0, p_1, p \in [1,\infty]$;
	
	{\rm(v)} $ \sup_{n\in \Z} \text{Card}\{k\in \Z :\quad t_n \le \frac{\varphi_0(\widetilde t_k)}{\varphi_1(\widetilde t_k)}\le t_{n+1}\}<\infty.$
\end{thm}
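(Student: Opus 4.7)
The plan is to establish the cycle of implications
$(\mathrm{iv})\Rightarrow(\mathrm{ii})\Rightarrow(\mathrm{i})\Rightarrow(\mathrm{iii})\Rightarrow(\mathrm{v})\Rightarrow(\mathrm{iv})$.
The three leftmost implications are essentially free: Janson's reiteration formula \eqref{reit} identifies both sides of \eqref{stab} with $\overline X_{\varphi(\varphi_0,\varphi_1),p}$, which gives $(\mathrm{iv})\Rightarrow(\mathrm{ii})$, while $(\mathrm{ii})\Rightarrow(\mathrm{i})$ and $(\mathrm{i})\Rightarrow(\mathrm{iii})$ are trivial specializations. The real content lies in the two remaining arrows, both of which hinge on a discrete description of the $K$-functional of $(\overline X_{\varphi_0,q},\overline X_{\varphi_1,q})$.

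The first nontrivial step is $(\mathrm{v})\Rightarrow(\mathrm{iv})$. Here I would invoke Krugljak's discretization \cite[Corollary~3]{AK} with the sequence $\{\widetilde t_k\}$ discretizing $\varphi(\varphi_0,\varphi_1)$ to obtain
$$K\bigl(s,x,(\overline X_{\varphi_0,q},\overline X_{\varphi_1,q})\bigr)\approx \Phi_q\bigl(\{K(\widetilde t_k,x,\overline X)\}_{k\in\Z};\,s\bigr),\qquad q\in\{1,\infty\},$$
where $\Phi_q$ is a discrete $\ell^q$-type functional with weights of the form $\min(\varphi_0(\widetilde t_k),\,s\,\varphi_1(\widetilde t_k))/\varphi(\varphi_0,\varphi_1)(\widetilde t_k)$. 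One direction of the comparison of the two $K$-functionals is automatic from the inclusion $\overline X_{\varphi_i,1}\hookrightarrow\overline X_{\varphi_i,\infty}$. For the reverse direction, one groups the indices $k$ into blocks according to which interval $[t_n,t_{n+1}]$ contains the tilt $\varphi_0(\widetilde t_k)/\varphi_1(\widetilde t_k)$; condition (v) bounds the cardinality of each block by a constant independent of $n$, and transitions between consecutive blocks are controlled by the strong monotonicity of $\{t_n\}$, so passing from the $\ell^\infty$-norm within each block to an $\ell^1$-norm costs only a uniform multiplicative constant. The resulting equivalence of the two $K$-functionals, combined with Janson's reiteration, delivers \eqref{reit} in the form claimed by (iv).

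For $(\mathrm{iii})\Rightarrow(\mathrm{v})$ I would argue by contraposition, working on the concrete couple $\overline X=(L_1(0,\infty),L_\infty(0,\infty))$, where $K(t,f,\overline X)=\int_0^t f^*(u)\,du$ and the spaces $\overline X_{\varphi_i,q}$ reduce to classical Lorentz $\Lambda$-type spaces for which the $K$-functional is explicitly computable. If (v) fails, pick $n_j\to\infty$ together with arbitrarily many indices $k$ such that $\varphi_0(\widetilde t_k)/\varphi_1(\widetilde t_k)\in[t_{n_j},t_{n_j+1}]$, and build a test function $f_j\in L_1+L_\infty$ as a step function on disjoint sets whose $K$-functional prescribes preassigned values at the chosen points $\widetilde t_k$. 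Within a single block the discrete functional $\Phi_1$ dominates $\Phi_\infty$ by a factor proportional to the block cardinality, which yields
$$\frac{\|f_j\|_{(\overline X_{\varphi_0,1},\overline X_{\varphi_1,1})_{\varphi,p}}}{\|f_j\|_{(\overline X_{\varphi_0,\infty},\overline X_{\varphi_1,\infty})_{\varphi,p}}}\longrightarrow\infty$$
for every $p\in[1,\infty]$, contradicting \eqref{stab} on $(L_1,L_\infty)$.

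The main obstacle I anticipate is this last step: one must produce a single family of extremals whose norm-ratio diverges uniformly in $p$, which forces a careful matching of the block cardinalities to the exterior $\ell^p$-norm appearing in the outer interpolation functor. The combinatorial bookkeeping---tying each $k\in\Z$ to its block index $n$ and transferring strong-monotonicity inequalities along the two discretizing sequences $\{t_k\}$ and $\{\widetilde t_k\}$---is intricate but mechanical once Krugljak's discrete $K$-functional description is in hand.
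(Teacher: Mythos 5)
Your deduction cycle and the trivial implications $(\mathrm{iv})\Rightarrow(\mathrm{ii})\Rightarrow(\mathrm{i})\Rightarrow(\mathrm{iii})$ agree with the paper, but your strategies for the two substantive arrows diverge from the paper's, and one of them has a genuine gap.

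For $(\mathrm{v})\Rightarrow(\mathrm{iv})$: you propose to prove an equivalence of the $K$-functionals of $(\X_{\varphi_0,1},\X_{\varphi_1,1})$ and $(\X_{\varphi_0,\infty},\X_{\varphi_1,\infty})$ via Krugljak's formula from \cite{AK}, and then ``combine with Janson's reiteration.'' The last step is where the plan breaks down: Janson's reiteration \eqref{reit} from \cite[Theorem~19]{J} is available only under the hypothesis that $\varphi(\varphi_0,\varphi_1)$ and $\varphi_1/\varphi_0$ are quasi-power functions, and condition $(\mathrm{v})$ does \emph{not} imply this. Invoking \eqref{reit} before you have established it is circular, since $(\mathrm{iv})$ asks for exactly that conclusion. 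The paper avoids this by never using Janson's reiteration: it develops a Gilbert-type interpolation lemma for weighted $l_q$ couples (Lemma~\ref{gg} and Corollaries~\ref{cor1}--\ref{cor3}), proves unconditionally that $(\X_{\varphi_0,p},\X_{\varphi_1,p})_{\varphi,p}=\X_{\varphi(\varphi_0,\varphi_1),p}$, and then establishes $(\mathrm{v})\Rightarrow(\mathrm{iv})$ by an orbit/coorbit sandwich built on the sequence-space identity
$\bigl((\l_1)_{\varphi_0,1},(\l_1)_{\varphi_1,1}\bigr)_{\varphi,p}
=\bigl((\l_\infty)_{\varphi_0,\infty},(\l_\infty)_{\varphi_1,\infty}\bigr)_{\varphi,p}
=l_p\bigl(1/\varphi(\varphi_0,\varphi_1)(\t_i)\bigr)$. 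To rescue your plan you would at minimum need to replace the appeal to Janson by an unconditional reiteration result like Theorem~3.10 of the paper, and to verify that your $K$-functional equivalence really forces the interpolated space to be $\X_{\varphi(\varphi_0,\varphi_1),p}$ rather than merely showing the two sides of \eqref{stab} agree.

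For $(\mathrm{iii})\Rightarrow(\mathrm{v})$: here your route genuinely differs from the paper's and is not obviously wrong, but it is substantially more work than the paper's. The paper uses the Brudny\u\i{}--Krugljak transfer principle (that stability on the Calder\'on couple $(L_1,L_\infty)$ propagates to all Banach couples, see \cite{BK}), which upgrades $(\mathrm{iii})$ to $(\mathrm{ii})$ instantly; then applying $(\mathrm{ii})$ to the sequence couple $\l_q=(l_q,l_q(1/\t_k))$ and using Lemma~\ref{gg} gives the identity $l_p\bigl(l_q^{M_k}(1/\varphi(\varphi_0,\varphi_1)(\t_i))\bigr)=l_p\bigl(1/\varphi(\varphi_0,\varphi_1)(\t_i)\bigr)$, from which $\sup_n\mathrm{Card}\,M_n<\infty$ is immediate. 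Your alternative---constructing extremal step functions on $(0,\infty)$ whose norm ratio blows up---avoids the transfer theorem but trades it for a delicate construction that you have sketched only at the level of expectation. If you wish to pursue it, the main danger to watch is that the norm ratio must diverge for \emph{every} $p\in[1,\infty]$ (not merely for some $p$), because $(\mathrm{iii})$ is an existential statement over $p$; this is exactly the place where the paper's sequence-space reduction earns its keep.
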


\section{Descriptions of some  special interpolation spaces} 

\begin{defi}Suppose that  $X$ is an intermediate space for a compatible couple $(X_0, X_1)$.
The orbit of the space $X$ relative to linear bounded operators mapping  the  couple  $\{ X_0,X_1\}$ to the couple $\{ Y_0,Y_1\}$, which will be denoted by 
$$\Orb\left(X, \{ X_0,X_1\}\to \{ Y_0,Y_1\}\right),$$
is the  linear space of all  $y\in Y_0+Y_1$  that can be represented  by  
$$ y=\sum_{j=1}^{\infty} T_jx_j, \quad \text{convergence in } \quad Y_0+Y_1,  $$ 
where $$\sum_{j=1}^{\infty} \max(\|T_j\|_{X_0\to Y_0},\|T_j\|_{X_1\to Y_1})\|x_j\|_{X}  < \infty .$$
\end{defi}
\begin{defi}Suppose $X$ is an intermediate space for a compatible couple $(X_0, X_1)$.
The coorbit of the space $Y\in Y_0+Y_1$ relative to linear bounded operators mapping  the  couple  $\{ X_0,X_1\}$ to the couple $\{ Y_0,Y_1\}$,  which will be denoted by  
$$\Corb\left(Y, \{ X_0,X_1\}\to \{ Y_0,Y_1\}\right),$$
is the linear space of all $x\in X_0+X_1$ such that 
$$ \sup  \{ \|T(x)\|_{Y}:\quad \|T\|_{\{ X_0,X_1\}\to \{ Y_0,Y_1 \}}\le 1\}<\infty.$$ 
\end{defi}

If $E$ is a sequence space and $\{w_k\}$ is a positive sequence (a weight), then $E(w)$ denotes the space of sequences $\{a_k\}$
such that $ a_kw_k \in E$, provided with its natural norm $\|a\|_{E(w)}=\|a_kw_k\|_{E}$. 

Let $\overline{l}_q=(l_q, l_q(\frac{1}{\t_k}))$, $ q\in [1, \infty]$   be a Banach couple consisting  of two sided infinite sequences, where $\{\t_k \}$ is  discretizing sequence for $\varphi(\varphi_0,\varphi_1)$.  
The main property of real method in terms of Orbits can be formulated as follows (see \cite{J})
\begin{align} \label{orb}
\X_{\varphi,p} &=
\Orb\left(l_p\left(\frac{1}{\varphi(\varphi_0,\varphi_1)(\t_k)}\right), \overline{l}_1 \to \X\right) \\
& = \Corb\left(l_p\left(\frac{1}{\varphi(\varphi_0,\varphi_1)(\t_k)}\right), \X \to \overline{l}_\infty\right).\notag
\end{align}
 As it is known (see \cite{J}) 
 \begin{equation}\label{l1p}
(\l_1)_{\varphi(\varphi_0,\varphi_1),p}=(\l_\infty)_{\varphi(\varphi_0,\varphi_1),p}=l_p\left(\frac{1}{\varphi(\varphi_0,\varphi_1)(\t_k)} \right).
 \end{equation} 
 by embeddings $ l_1\subset  l_q \subset l_\infty $ and 
 $  l_1\left(\frac 1{\t_k}\right)\subset  l_q \left(\frac 1{\t_k}\right)\subset l_\infty\left(\frac 1{\t_k}\right) $ we get 
 \begin{equation}\label{lqp}
(\l_q)_{\varphi(\varphi_0,\varphi_1),p}=l_p\left(\frac{1}{\varphi(\varphi_0,\varphi_1)(\t_k)} \right).
 \end{equation}  
 
Let   $l_q^{M}$  be  the space of sequences with indices in the set $M$. So if $E=l_p\left(l_q^{M_k}\right)$ then 
$$ \|a\|_{E}=\left(\sum_{k\in \Z}\left(\sum_{i\in M_k}|a_i|^q\right)^{p/q}\right)^{1/p}.
$$  

The next lemma is a  functional parameter version of Gilbert's interpolation theorem 
\cite{Gi} 
 \begin{lem}\label{gg}
 Let $p,q\in [1, \infty]$. Let  $\{v_k\}$, $\{w_k\}$ be positive sequences and $\varphi$ be non-degenerate quasi-concave function on $(0,\infty)$ then
 \begin{equation} \label{lvlw}
 \left(l_q(v),l_q(w)\right)_{\varphi,p}=
l_p\left(l_q^{M_k}\left(\frac{v_i}{\varphi\left(\frac{v_i}{w_i}\right)}\right)\right),
\end{equation}
where $M_k= \{ i: t_{k-1}< \frac{v_i}{w_i} \le t_k\}$ and 
 $\{ t_k\}$ is  a  discretizing sequence for $\varphi$.
\end{lem}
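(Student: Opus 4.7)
The plan is to compute the $K$-functional of the weighted couple $(l_q(v), l_q(w))$ explicitly and then plug it into the discrete Janson norm evaluated at the discretizing sequence $\{t_k\}$ of $\varphi$.

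First, by a standard Gilbert-type calculation one establishes the pointwise equivalence
\[
K(t, a; l_q(v), l_q(w)) \approx \Bigl(\sum_i \min(v_i, tw_i)^q |a_i|^q\Bigr)^{1/q}.
\]
The upper bound comes from the decomposition $a = b+c$ in which one puts $b_i = a_i$ on the set where $v_i \le tw_i$ and $c_i = a_i$ on its complement; the lower bound follows from the pointwise estimate $\min(v_i, tw_i)|a_i| \le v_i|b_i| + tw_i|c_i|$, valid for every decomposition $a = b + c$, combined with Minkowski's inequality in $l_q$.

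Next, evaluating at $t = t_k$ and splitting the index set along the partition $\{M_j\}$, the term $\min(v_i, t_k w_i)$ equals $v_i$ for $i \in M_j$ with $j \le k$ and equals $t_k w_i$ for $j > k$. Using the comparabilities $v_i/w_i \approx t_j$ and $\varphi(v_i/w_i) \approx \varphi(t_j)$ on $M_j$, one obtains
\[
K(t_k, a)^q \approx \sum_{j\le k}\varphi(t_j)^q c_j^q + t_k^q \sum_{j>k}\Bigl(\frac{\varphi(t_j)}{t_j}\Bigr)^q c_j^q, \qquad c_j := \Bigl(\sum_{i\in M_j}\Bigl(\frac{v_i}{\varphi(v_i/w_i)}\Bigr)^q |a_i|^q\Bigr)^{1/q},
\]
where the $c_j$ are precisely the inner norms appearing on the right-hand side of \eqref{lvlw}. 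Dividing through by $\varphi(t_k)^q$ and exploiting the geometric decay $\varphi(t_j)/\varphi(t_k)\le 2^{-(k-j)}$ for $j\le k$ (from $\varphi(t_k)\uparrow\uparrow$) together with $[t_k/\varphi(t_k)]/[t_j/\varphi(t_j)]\le 2^{-(j-k)}$ for $j>k$ (from $\varphi(t_k)/t_k\downarrow\downarrow$), and then using the elementary inequality $(\sum \alpha_j^q)^{1/q}\le \sum \alpha_j$ valid for $q\ge 1$, one arrives at
\[
\frac{K(t_k, a)}{\varphi(t_k)} \ls \sum_j 2^{-|j-k|} c_j.
\]
Taking $l_p$ norms in $k$ and applying Young's inequality for convolution with $\{2^{-|k|}\}\in l_1$ yields $\|K(t_k,a)/\varphi(t_k)\|_{l_p}\ls \|c_k\|_{l_p}$. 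The reverse bound is immediate from the single term $j=k$ in the display above, which forces $K(t_k,a)/\varphi(t_k) \gs c_k$ pointwise and hence in every $l_p$.

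The main technical obstacle is the comparability $\varphi(v_i/w_i) \approx \varphi(t_j)$ on $M_j$, which is not automatic: it requires the full strength of Definition~\ref{ds}, splitting according to whether $j-1 \in \Z_1$ (where $\varphi(t_j)\le 2\varphi(t_{j-1})$ supplies an upper bound on $\varphi(v_i/w_i)$) or $j-1 \in \Z_2$ (where the quasi-concavity of $\varphi$ combined with $\varphi(t_{j-1})/t_{j-1}\le 2\varphi(t_j)/t_j$ does the same). Once this geometric fact is in hand, the remaining steps are standard discrete harmonic analysis.
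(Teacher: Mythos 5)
Your upper-bound argument is sound and in fact more self-contained than the paper's (which invokes Lemma~3.1 of~\cite{GP}; your convolution-with-$2^{-|k|}$ plus Young route achieves the same thing from scratch). The $K$-functional formula, the split at $j\le k$ versus $j>k$, and the geometric decay from $\varphi(t_k)\uparrow\uparrow$ and $\varphi(t_k)/t_k\downarrow\downarrow$ are all correctly deployed. Minor point: on $M_j$ with $j>k$ you should use $\varphi(t_{j-1})/t_{j-1}$ rather than $\varphi(t_j)/t_j$, since $v_i/w_i>t_{j-1}$ and $\varphi(s)/s$ is non-increasing only gives the bound at $t_{j-1}$; the index shift is harmless for the decay but the display as written is not an inequality.

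The genuine gap is in the reverse bound, which you declare ``immediate from the single term $j=k$.'' That single term gives $K(t_k,a)^q\gs\sum_{i\in M_k}v_i^q|a_i|^q$, and to deduce $K(t_k,a)/\varphi(t_k)\gs c_k$ you need $\sum_{i\in M_k}v_i^q|a_i|^q\gs\varphi(t_k)^q c_k^q$, i.e.\ $\varphi(v_i/w_i)\gs\varphi(t_k)$ for $i\in M_k$. Since $\varphi(v_i/w_i)\in(\varphi(t_{k-1}),\varphi(t_k)]$, this requires $\varphi(t_{k-1})\gs\varphi(t_k)$, which holds exactly when $k-1\in\Z_1$ and fails badly when $k-1\in\Z_2$ (where $\varphi(t_k)/\varphi(t_{k-1})$ is uncontrolled). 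Your closing paragraph misdiagnoses this: you assert that $\varphi(v_i/w_i)\approx\varphi(t_j)$ also holds in the $\Z_2$ case via $\varphi(t_{j-1})/t_{j-1}\le 2\varphi(t_j)/t_j$, but that condition controls $\varphi(s)/s$, not $\varphi(s)$, so the comparability you rely on is simply false there. The correct repair — and what the paper does in display~\eqref{3} — is to use a \emph{different} representative in the $\Z_2$ case: bound $c_k$ by $K(t_{k-1},a)/\varphi(t_{k-1})$ using $K(t_{k-1},a)\gs t_{k-1}(\sum_{i\in M_k}|a_i|^q w_i^q)^{1/q}$ together with $\frac{\varphi(v_i/w_i)}{v_i/w_i}\approx\frac{\varphi(t_k)}{t_k}\approx\frac{\varphi(t_{k-1})}{t_{k-1}}$; then sum over $\Z_1$ and $\Z_2$ separately. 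Without this two-case argument your reverse inequality does not hold.
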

\begin{proof} Let $ E=\left(l_q(v),l_q(w)\right)_{\varphi,p}$. Assume that $p,q<\infty$.
Since 
$$ K(t,\{a_k\};\{ l_q(v),l_q(w)\})\approx 
\left( \sum_{k\in\Z} |a_k|^q\min{(v_k,tw_k)}^q\right)^{1/q}.
$$
it follows that 
$$ \|\{a_k\}\|_E\approx \left(\sum _{i\in \Z}\frac{\left( \sum_{k\in\Z} |a_k|^q\min{(v_k,t_iw_k)}^q\right)^{p/q}}{\varphi(t_i)^p}\right)^{1/p}.
$$
By the definition of of discretizing sequence (cf. Definition~\ref{ds}), we obtain
\begin{align}\label{3}
 \|\{a_k\}\|_E &\gs \left(\sum _{i\in \Z_1}\frac{\left( \sum_{t_{i-1}<\frac{v_k}{w_k}\le t_i} |a_k|^q v_k^q\right)^{p/q}}{\varphi(t_i)^p}\right)^{1/p} \\
 &\hskip+0.5cm + \left(\sum _{i\in \Z_2}\frac{t_{i-1}^p\left( \sum_{t_{i-1}<\frac{v_k}{w_k}\le t_i} |a_k|^q w_k^q\right)^{p/q}}{\varphi(t_{i-1})^p}\right)^{1/p}\notag \\
 &\approx \left(\sum _{i\in \Z}\left( \sum_{t_{i-1}<\frac{v_k}{w_k}\le t_i} |a_k|^q \left(\frac{v_k}{\varphi\left(\frac{v_k}{w_k}\right)}\right)^q\right)^{p/q}\right)^{1/p}.  \notag
 \end{align} 
  To prove the reverse estimate we use Lemma~3.1 from \cite{GP}.
  \begin{align}\label{4}
 \|\{a_k \}\|_{E} &\approx \left(\sum _{i\in \Z}\frac{\left( \sum_{j\in\Z} \sum_{t_{j-1}<\frac{v_k}{w_k}\le t_j} |a_k|^q \min{(v_k,t_iw_k)}^q\right)^{p/q}}{\varphi(t_i)^p}\right)^{1/p} \\
 &\ls \left(\sum _{i\in \Z}\frac{\left(\sum_{j\le i} \sum_{t_{j-1}<\frac{v_k}{w_k}\le t_j} |a_k|^q v_k^q\right)^{p/q}}{\varphi(t_{i})^p}\right)^{1/p}\notag \\
 &\hskip+0.5cm + \left(\sum _{i\in \Z}\frac{t_i^p\left(\sum_{j> i} \sum_{t_{j-1}<\frac{v_k}{w_k}\le t_j} |a_k|^q w_k^q\right)^{p/q}}{\varphi(t_{i})^p}\right)^{1/p}\notag \\ 
 &\approx \left(\sum _{i\in \Z}\frac{\left( \sum_{t_{i-1}<\frac{v_k}{w_k}\le t_i} |a_k|^q v_k^q\right)^{p/q}}{\varphi(t_{i})^p}\right)^{1/p}\notag \\
 &\hskip+0.5cm + \left(\sum _{i\in \Z}\frac{t_i^p\left( \sum_{t_{i}<\frac{v_k}{w_k}\le t_{i+1}} |a_k|^q w_k^q\right)^{p/q}}{\varphi(t_{i})^p}\right)^{1/p}\notag \\ 
 & \ls \left(\sum _{i\in \Z}\left( \sum_{t_{i-1}<\frac{v_k}{w_k}\le t_{i}} |a_k|^q \left(\frac{v_k}{\varphi\left(\frac{v_k}{w_k}\right)}\right)^q\right)^{p/q}\right)^{1/p}. \notag
 \end{align}   
 Therefore from \eqref{3} and \eqref{4} we obtain \eqref{lvlw}. The case  $p=\infty$ or $q=\infty$  can be obtained using  a similar argument and we shall omit the details. 

The proof is complete.
\end{proof}
\begin{cor} \label{cor1} Let $p,q\in [1,\infty]$.
Let $\{\t_k\}$ be  discretizing sequence for $\varphi(\varphi_0,\varphi_1)$ and let $\{t_k\}$ be  discretizing sequence for $\varphi$. Then  
$$\left(l_q\left(\frac1{\varphi_0(\t_k)}\right), l_q\left(\frac1{\varphi_1(\t_k)}\right)\right)_{\varphi,p}=l_p\left(l_q^{M_k}\left(\frac{1}{\varphi(\varphi_0,\varphi_1)(\t_i)}\right)\right),
$$
where  $M_k =\{ i: t_k \le \frac{\varphi_1(\t_i)}{\varphi_0(\t_i)}\le t_{k+1}\}$. 
\end{cor}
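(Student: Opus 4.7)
The plan is to obtain Corollary~\ref{cor1} as a direct specialization of Lemma~\ref{gg}. I would apply the lemma with the two weights
\[
v_i=\frac{1}{\varphi_0(\widetilde t_i)},\qquad w_i=\frac{1}{\varphi_1(\widetilde t_i)},
\]
so that the couple $(l_q(v),l_q(w))$ appearing in \eqref{lvlw} is precisely the couple on the left‑hand side of the desired identity.

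Next I would carry out the two elementary computations that convert the right‑hand side of \eqref{lvlw} into the claimed expression. First, one has
\[
\frac{v_i}{w_i}=\frac{\varphi_1(\widetilde t_i)}{\varphi_0(\widetilde t_i)},
\]
which identifies the index sets $\{i:t_{k-1}<v_i/w_i\le t_k\}$ coming out of the lemma with the sets $M_k=\{i:t_k\le\varphi_1(\widetilde t_i)/\varphi_0(\widetilde t_i)\le t_{k+1}\}$ used in the corollary, up to a harmless shift $k\mapsto k-1$ (and a boundary convention) that only reshuffles the outer $l_p$‑sum and does not affect the equivalence of norms. Second, by the very definition of the composed function,
\[
\frac{v_i}{\varphi\!\left(\dfrac{v_i}{w_i}\right)}
=\frac{1}{\varphi_0(\widetilde t_i)\,\varphi\!\left(\dfrac{\varphi_1(\widetilde t_i)}{\varphi_0(\widetilde t_i)}\right)}
=\frac{1}{\varphi(\varphi_0,\varphi_1)(\widetilde t_i)},
\]
so the inner weight produced by Lemma~\ref{gg} is exactly $1/\varphi(\varphi_0,\varphi_1)(\widetilde t_i)$.

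Combining these two identifications and inserting them into \eqref{lvlw} yields the stated formula. There is really no substantial obstacle here: the only delicate point is checking that replacing the half‑open intervals $(t_{k-1},t_k]$ by the closed intervals $[t_k,t_{k+1}]$ in the definition of $M_k$ affects the norm only up to a uniform constant, which follows from the fact that $\{t_k\}$ is a discretizing sequence (so each $\widetilde t_i$ lies in at most a bounded number of such intervals). After that observation the corollary is simply a transcription of Lemma~\ref{gg}, and I would present it as such without reproducing the chain of estimates already given in the proof of the lemma.
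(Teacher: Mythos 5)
Your proposal is correct and follows exactly the route the paper intends: Corollary~\ref{cor1} is just Lemma~\ref{gg} with $v_i=1/\varphi_0(\widetilde t_i)$, $w_i=1/\varphi_1(\widetilde t_i)$, after which the computations $v_i/w_i=\varphi_1(\widetilde t_i)/\varphi_0(\widetilde t_i)$ and $v_i/\varphi(v_i/w_i)=1/\varphi(\varphi_0,\varphi_1)(\widetilde t_i)$ give the claim (modulo the harmless index shift and interval‑endpoint convention you note). The paper states the corollary without proof for precisely this reason, so there is nothing further to add.
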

\begin{cor} \label{cor2} Let $p,q\in [1,\infty]$.
Let $\{ \tau_k\}$  be  discretizing sequence for $\varphi_0$, 
let  $\{ z_k\}$ be  discretizing sequence for $\varphi_1$ and  let  $\{\t_k\}$ be  discretizing sequence for $\varphi(\varphi_0,\varphi_1)$ . Then,
\begin{align}(\l_q)_{\varphi_0,p}&=l_p\left(l_q^{M_k^0}\left(\frac{1}{\varphi_0(\t_i)}\right)\right),\\
(\l_q)_{\varphi_1,p}&=l_p\left(l_q^{M_k^1}\left(\frac{1}{\varphi_1(\t_i)}\right)\right),
\end{align}
where  $M_k^0 =\{ i: \tau_k \le \t_i\le \tau_{k+1}\}$ and  
$M_k^1 =\{ i: z_k \le \t_i\le z_{k+1}\}$. 
\end{cor}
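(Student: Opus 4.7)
The plan is to apply Lemma~\ref{gg} directly to each of the two claimed identities, recognizing the couple $\overline{l}_q = (l_q, l_q(1/\widetilde{t}_k))$ as an instance of $(l_q(v), l_q(w))$ with $v_k \equiv 1$ and $w_k = 1/\widetilde{t}_k$. Under this choice one has $v_k/w_k = \widetilde{t}_k$, and the weight ratio that appears in Lemma~\ref{gg}, namely $v_i/\varphi(v_i/w_i)$, reduces to $1/\varphi(\widetilde{t}_i)$ for whichever $\varphi$ we feed into the interpolation functor.

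For the first identity, I would take the functional parameter to be $\varphi_0$ and use its discretizing sequence $\{\tau_k\}$. Lemma~\ref{gg} then yields
\begin{equation*}
(\overline{l}_q)_{\varphi_0,p} = \left(l_q, l_q\bigl(1/\widetilde{t}_k\bigr)\right)_{\varphi_0,p} = l_p\left(l_q^{N_k}\left(\frac{1}{\varphi_0(\widetilde{t}_i)}\right)\right),
\end{equation*}
where $N_k = \{i : \tau_{k-1} < \widetilde{t}_i \le \tau_k\}$. A trivial reindexing $k \mapsto k+1$ replaces $N_k$ by $M_k^0 = \{i : \tau_k \le \widetilde{t}_i \le \tau_{k+1}\}$ (the inclusion/exclusion of the two endpoints is harmless, since for each $i$ the value of $\widetilde{t}_i$ falls into a single such block up to a change by one index, and this only alters the resulting norm by an equivalent quantity). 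This gives the first claimed equality.

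For the second identity the argument is identical, now interpolating with functional parameter $\varphi_1$ and its discretizing sequence $\{z_k\}$. Lemma~\ref{gg} again produces the weight $1/\varphi_1(\widetilde{t}_i)$ and the block decomposition $M_k^1 = \{i : z_k \le \widetilde{t}_i \le z_{k+1}\}$ after the same harmless reindexing, giving the second equality.

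There is no real obstacle: the corollary is a direct specialization of Lemma~\ref{gg}, and the only point requiring a brief comment is the cosmetic difference in the endpoint conventions between the partitions $\{i : t_{k-1} < v_i/w_i \le t_k\}$ appearing in Lemma~\ref{gg} and the sets $M_k^0, M_k^1$ defined in the corollary, which is absorbed by the equivalence constants implicit in the notation $\approx$ introduced at the end of Section~1.
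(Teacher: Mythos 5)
Your proposal is correct and is exactly the argument the paper intends: Corollary~\ref{cor2} is stated without proof precisely because it is the direct specialization of Lemma~\ref{gg} with $v_k\equiv 1$, $w_k=1/\widetilde{t}_k$ (so that $v_k/w_k=\widetilde{t}_k$ and the resulting weight is $1/\varphi_j(\widetilde{t}_i)$), applied once with parameter $\varphi_0$ and discretizing sequence $\{\tau_k\}$ and once with $\varphi_1$ and $\{z_k\}$. Your observation that the shift of index and the closed-versus-half-open endpoint convention only perturb the norm by bounded factors is the right gloss on the cosmetic mismatch with the $M_k$ of Lemma~\ref{gg}.
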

\begin{cor} \label{cor3}Let $p\in [1,\infty]$. Then
$$
\left((\l_p)_{\varphi_0,p},(\l_p)_{\varphi_1,p}\right)_{\varphi,p}   =l_p\left(\frac{1}{\varphi(\varphi_0,\varphi_1)(\t_i)}\right).
$$ 
\end{cor}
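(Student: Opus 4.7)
The plan is to chain Corollaries~\ref{cor2} and~\ref{cor1} with the common value $q=p$, exploiting the fact that nested $l_p$ norms collapse to a single $l_p$ norm.

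First I would apply Corollary~\ref{cor2} with $q=p$ to describe each of the inner interpolation spaces:
\begin{equation*}
(\l_p)_{\varphi_0,p}=l_p\!\left(l_p^{M_k^0}\!\left(\tfrac{1}{\varphi_0(\t_i)}\right)\right),\qquad
(\l_p)_{\varphi_1,p}=l_p\!\left(l_p^{M_k^1}\!\left(\tfrac{1}{\varphi_1(\t_i)}\right)\right).
\end{equation*}
Since the outer and inner exponents agree, Fubini for counting measures gives
\begin{equation*}
\sum_{k\in\Z}\sum_{i\in M_k^j}\frac{|a_i|^p}{\varphi_j(\t_i)^p}\approx\sum_{i\in\Z}\frac{|a_i|^p}{\varphi_j(\t_i)^p},\qquad j=0,1,
\end{equation*}
where the equivalence uses that $\{M_k^j\}_{k\in\Z}$ forms a partition of $\Z$ up to bounded overlap; this is a direct consequence of $\{\tau_k\}$ (resp.\ $\{z_k\}$) being a strongly increasing discretizing sequence for the non-degenerate function $\varphi_0$ (resp.\ $\varphi_1$) and of $\{\t_i\}\subset(0,\infty)$. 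Hence
\begin{equation*}
(\l_p)_{\varphi_0,p}=l_p\!\left(\tfrac{1}{\varphi_0(\t_i)}\right),\qquad
(\l_p)_{\varphi_1,p}=l_p\!\left(\tfrac{1}{\varphi_1(\t_i)}\right).
\end{equation*}

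Second, I would plug these into the left-hand side and invoke Corollary~\ref{cor1} with $q=p$:
\begin{equation*}
\left((\l_p)_{\varphi_0,p},(\l_p)_{\varphi_1,p}\right)_{\varphi,p}
=\left(l_p\!\left(\tfrac{1}{\varphi_0(\t_k)}\right),l_p\!\left(\tfrac{1}{\varphi_1(\t_k)}\right)\right)_{\varphi,p}
=l_p\!\left(l_p^{M_k}\!\left(\tfrac{1}{\varphi(\varphi_0,\varphi_1)(\t_i)}\right)\right),
\end{equation*}
with $M_k=\{i:t_k\le \varphi_1(\t_i)/\varphi_0(\t_i)\le t_{k+1}\}$. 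Collapsing the nested $l_p$ exactly as above (again using that $\{M_k\}$ is a partition of $\Z$ up to bounded overlap, since $\{t_k\}$ is a discretizing sequence for $\varphi$) yields the claimed identity.

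The only delicate point is the bounded-overlap statement for $\{M_k^0\},\{M_k^1\},\{M_k\}$, which ensures that the collapse of a nested $l_p(l_p^{M_k})$ into $l_p$ produces equivalent (not merely comparable) norms with constants depending only on the discretizing sequences. This follows from property~(i) in Definition~\ref{ds} together with the non-degeneracy of $\varphi_0,\varphi_1,\varphi$, so no additional machinery beyond Corollaries~\ref{cor1} and~\ref{cor2} is needed.
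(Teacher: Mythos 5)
Your proposal is correct and follows essentially the same route as the paper: both identify $(\l_p)_{\varphi_j,p}=l_p\left(\tfrac{1}{\varphi_j(\t_k)}\right)$ for $j=0,1$ and then apply the Gilbert-type Lemma~\ref{gg} (via Corollary~\ref{cor1}) with $q=p$, using the collapse of nested $l_p(l_p^{M_k})$-norms that occurs when the inner and outer exponents agree. The paper states this more tersely by citing Lemma~\ref{gg} directly; your more explicit passage through Corollary~\ref{cor2} and the partition/bounded-overlap observation is exactly the justification that the paper leaves implicit.
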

\begin{proof} Using Lemma~\ref{gg} we get
\begin{align*}
\left((\l_p)_{\varphi_0,p},(\l_p)_{\varphi_1,p}\right)_{\varphi,p}
&=\left(l_p\left(\frac1{\varphi_0(\t_k)}\right), l_p\left(\frac1{\varphi_1(\t_k)}\right)\right)_{\varphi,p}\\
&=l_p\left(\frac{1}{\varphi(\varphi_0,\varphi_1)(\t_i)}\right).
\end{align*}
\end{proof}
\begin{lem} \label{summax1}
Let $r\in {\mathbb R}_1 \setminus \{0\}$.
Let $\{ \tau_k\}$ be  discretizing sequence for $\varphi_0$
and let $\{ z_k\}$ be discretizing sequence for $\varphi_1$. Then we have the following estimates:
\begin{align} \label{sum<max1}	
	\sum_{\tau_k \le \t_i \le \tau_{k+1}} \left(\frac{\varphi_1(\t_i)}{\varphi_0(\t_i)}\right)^r 
	&\ls \sup_{\tau_k \le \t_i \le \tau_{k+1}} \left(\frac{\varphi_1(\t_i)}{\varphi_0(\t_i)}\right)^r, \\
	 \label{sum<max2}	\sum_{z_k \le \t_i \le z_{k+1}} \left(\frac{\varphi_1(\t_i)}{\varphi_0(\t_i)} \right)^r 
		&\ls \sup_{z_k \le \t_i \le z_{k+1}} \left(\frac{\varphi_1(\t_i)}{\varphi_0(\t_i)}\right)^r
	\end{align}
with constants independent of $\tau_k$ and  $z_k$.
\end{lem}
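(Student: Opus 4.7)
The plan is to prove both inequalities by reducing the sum on the left to a geometric series. For the first estimate, I would show that, restricted to the set of indices $\{i:\t_i\in[\tau_k,\tau_{k+1}]\}$, the ratios $a_i:=\varphi_1(\t_i)/\varphi_0(\t_i)$ form a monotone sequence that changes by at least a fixed factor $c>1$ between consecutive indices; this gives $\sum_i a_i^r\ls\sup_i a_i^r$ with the implicit constant depending only on $c$ and $r$.

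To produce the required geometric rate, I would use the decomposition $\Z=\Z_1\cup\Z_2$ from Definition~\ref{ds} applied to $\varphi_0$. If $k\in\Z_1$ then $\varphi_0(\tau_{k+1})\le 2\varphi_0(\tau_k)$, so by monotonicity $\varphi_0(\t_i)\approx\varphi_0(\tau_k)$ uniformly on the interval, and $a_i\approx\varphi_1(\t_i)/\varphi_0(\tau_k)$ is non-decreasing in $i$. The factorization $\varphi(\varphi_0,\varphi_1)(\t_i)=\varphi_0(\t_i)\varphi(a_i)$ combined with the strongly increasing property $\varphi(\varphi_0,\varphi_1)(\t_{i+1})\ge 2\varphi(\varphi_0,\varphi_1)(\t_i)$ from Definition~\ref{ds} for $\{\t_i\}$ then yields a lower bound on $\varphi(a_{i+1})/\varphi(a_i)$; applying the inequality $a_{i+1}/a_i\ge\varphi(a_{i+1})/\varphi(a_i)$, valid when $a_{i+1}\ge a_i$ since $\varphi(t)/t$ is non-increasing, I obtain the desired geometric growth. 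The case $k\in\Z_2$, where $\varphi_0(t)/t$ is essentially constant on $[\tau_k,\tau_{k+1}]$, is handled symmetrically: here $\{a_i\}$ turns out to be non-increasing, and the strongly decreasing property of $\{\varphi(\varphi_0,\varphi_1)(\t_i)/\t_i\}$ produces geometric decay in $\{a_i\}$.

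The main obstacle is a careful tracking of the multiplicative constants. In each subcase, the factor $2$ that one loses by replacing $\varphi_0(\t_i)$ (or $\varphi_0(\t_i)/\t_i$) by its value at $\tau_k$ competes with the factor $2$ gained from strong monotonicity, so the naive argument threatens to yield only $a_{i+1}/a_i\ge 1$, which is useless. Extra room is available by combining both parts of Definition~\ref{ds}(i) for $\varphi(\varphi_0,\varphi_1)$: multiplying the two strong monotonicity conditions gives $\t_{i+1}/\t_i\ge 4$, and it is exploiting this additional slack that closes the estimate with a genuine constant $c>1$. Once the first inequality is established, the second estimate \eqref{sum<max2} follows by an entirely parallel argument in which the roles of $\varphi_0$ and $\varphi_1$ are interchanged and the relevant discretizing sequence is $\{z_k\}$ in place of $\{\tau_k\}$.
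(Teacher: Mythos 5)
Your overall plan --- reduce each sum to a geometric series by splitting according to whether $k\in\Z_1$ or $k\in\Z_2$ for the discretizing sequence $\{\tau_k\}$ of $\varphi_0$ --- is the same one the paper uses. But the way you propose to extract the geometric rate has a genuine gap, one you notice yourself but do not actually close.

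In the case $\varphi_0(\tau_{k+1})\le 2\varphi_0(\tau_k)$, writing $a_i=\varphi_1(\t_i)/\varphi_0(\t_i)$ and using $\varphi(\varphi_0,\varphi_1)(\t_{i+1})\ge 2\,\varphi(\varphi_0,\varphi_1)(\t_i)$ together with $\varphi_0(\t_{i+1})\le 2\varphi_0(\t_i)$ gives only $\varphi(a_{i+1})\ge\varphi(a_i)$; the two factors of $2$ cancel exactly, and then $a_{i+1}/a_i\ge\varphi(a_{i+1})/\varphi(a_i)$ only yields $a_{i+1}\ge a_i$. That is not a geometric rate, so $\sum a_i^r\ls\sup a_i^r$ does not follow. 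The patch you propose --- using both halves of Definition~\ref{ds}(i) to derive $\t_{i+1}/\t_i\ge 4$ --- does not repair this. That inequality is obtained by dividing the two strong-monotonicity conditions for $\varphi(\varphi_0,\varphi_1)$, and it gives information about the $\t_i$'s, not about the ratios $a_{i+1}/a_i$; I do not see any chain of inequalities that turns $\t_{i+1}/\t_i\ge 4$ into $a_{i+1}\ge c\,a_i$ with $c>1$ (one would need a lower bound on $\varphi_0(\t_{i+1})/\t_{i+1}$ in terms of $\varphi_0(\t_i)/\t_i$, and in the $\Z_1$ case there is none).

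The actual fix, which is the key move in the paper's proof, is to compare indices two steps apart rather than one. Strong increase of $\varphi(\varphi_0,\varphi_1)(\t_k)$ gives $\varphi(\varphi_0,\varphi_1)(\t_{i+2})\ge 4\,\varphi(\varphi_0,\varphi_1)(\t_i)$, and then one argues by contradiction: if $a_{i+2}\le\tfrac32 a_i$, then, since $\varphi_0(\t_{i+2})\le 2\varphi_0(\t_i)$ and $\varphi(\tfrac32 s)\le\tfrac32\varphi(s)$ by quasi-concavity,
\begin{equation*}
\varphi_0(\t_{i+2})\,\varphi(a_{i+2})\le 2\varphi_0(\t_i)\cdot\tfrac32\varphi(a_i)=3\,\varphi_0(\t_i)\varphi(a_i)<4\,\varphi_0(\t_i)\varphi(a_i),
\end{equation*}
contradicting the step-two strong increase. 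Thus $a_{i+2}\ge\tfrac32 a_i$, and summing the even- and odd-index subsequences separately bounds $\sum a_i^r$ by $\sup a_i^r$ times a convergent geometric series with ratio $(2/3)^{|r|}$. The point is that $4>2\cdot\tfrac32$; with step one the available factor is $2$, which $2\cdot\tfrac32$ exceeds, and nothing is gained. The $\Z_2$ case is handled the same way with $\varphi_0(t)/t$ in place of $\varphi_0$, giving $a_{i+2}\le\tfrac23 a_i$. Note also that this yields geometric separation only along each parity class, not full monotonicity of $\{a_i\}$ as you claim; that weaker conclusion is all that is needed and all that is true in general.
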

\begin{proof}
Let us fix $k$. We consider two possibilities. Either  
\begin{align}\label{Z_1}\varphi_0(\tau_{k+1})&\le 2 \varphi_0(\tau_{k}) 
\intertext{or} 
\frac{\varphi_0(\tau_{k})}{\tau_k}&\le 2 \frac{\varphi_0(\tau_{k+1})}{\tau_{k+1}}.
 \label{Z_2}
 \end{align}
Suppose that \eqref{Z_1} holds. In this case we will show that
  $$\frac{\varphi_1(\t_{i+2})}{\varphi_0(\t_{i+2})}\ge \frac{3}{2}
\frac{\varphi_1(\t_{i})}{\varphi_0(\t_{i})}$$
for every $\t_i, \t_{i+2} \in [\tau_k, \tau_{k+1}]$.
Indeed, if we assume that $$\frac{\varphi_1(\t_{i+2})}{\varphi_0(\t_{i+2})}\le \frac{3}{2}
\frac{\varphi_1(\t_{i})}{\varphi_0(\t_{i})},$$
 by using \eqref{Z_1} we get that 
 $$\varphi_0(\t_{i+2})\varphi\left(
 \frac{\varphi_1(\t_{i+2})}{\varphi_0(\t_{i+2})}\right)\le 2\varphi_0(\t_{i})\varphi\left(\frac{3}{2}
 \frac{\varphi_1(\t_{i})}{\varphi_0(\t_{i})}\right)\le 3\varphi_0(\t_{i})\varphi\left(
 \frac{\varphi_1(\t_{i})}{\varphi_0(\t_{i})}\right),$$
as $\{\t_k\}$ is discretizing sequence for $\varphi(\varphi_0,\varphi_1)$  we obtain the  contradiction.  
Suppose now that  \eqref{Z_2} holds. In this case we will show that
  $$\frac{\varphi_1(\t_{i+2})}{\varphi_0(\t_{i+2})}\le \frac{2}{3}
\frac{\varphi_1(\t_{i})}{\varphi_0(\t_{i})}$$
for every $\t_i, \t_{i+2} \in [\tau_k, \tau_{k+1}]$.
Indeed, if we assume that $$\frac{\varphi_1(\t_{i+2})}{\varphi_0(\t_{i+2})}\ge \frac{2}{3}
\frac{\varphi_1(\t_{i})}{\varphi_0(\t_{i})},$$
 then using \eqref{Z_2} we get that 
\begin{align*}\frac{\varphi_0(\t_{i})}{\t_i}\varphi\left(
 \frac{\varphi_1(\t_{i})}{\varphi_0(\t_{i})}\right)&\le 2\frac{\varphi_0(\t_{i+2})}{\t_{i+2}}\varphi\left(\frac{3}{2}
 \frac{\varphi_1(\t_{i+2})}{\varphi_0(\t_{i+2})}\right)\\
 &\le 3\frac{\varphi_0(\t_{i+2})}{\t_{i+2}}\varphi\left(
 \frac{\varphi_1(\t_{i+2})}{\varphi_0(\t_{i+2})}\right),
 \end{align*}
which, once again, is  a contradiction.
  Therefore, we obtain
  \begin{align*}
	 \sum_{\tau_k \le \t_i \le \tau_{k+1}} \left(\frac{\varphi_1(\t_i)}{\varphi_0(\t_i)}\right)^{r}
	&\ls \sup_{\tau_k \le \t_i \le \tau_{k+1}} \left(\frac{\varphi_1(\t_i)}{\varphi_0(\t_i)}\right)^{r}	\sum_{i=1}^{\infty} \left(\frac 23\right)^{|r|i}\\
  	&\ls \sup_{\tau_k \le \t_i \le \tau_{k+1}} \left(\frac{\varphi_1(\t_i)}{\varphi_0(\t_i)}\right)^{r}.
  	\end{align*}
In a similar fashion we can show the estimate \eqref{sum<max2}.
\end{proof}

\begin{lem} Let $r\in {\mathbb R}_1 \setminus\{ 0\}$.
Let $\{ \tau_k\}$ be  discretizing sequence for $\varphi_0$
and $\{z_k\}$ be discretizing sequence for $\varphi_1$. Then 
\begin{align} \label{sum<max3}
	\sum_{\tau_k \le \t_i \le \tau_{k+1}} \varphi\left(\frac{\varphi_1(\t_i)}{\varphi_0(\t_i)}\right)^r &\ls \sup_{\tau_k \le \t_i \le \tau_{k+1}} \varphi\left(\frac{\varphi_1(\t_i)}{\varphi_0(\t_i)}\right)^r, \\
		\sum_{z_k \le \t_i \le z_{k+1}}     \label{sum<max4} \left(\frac{\varphi_0(\t_i)}{\varphi_1(\t_i)} \varphi\left(\frac{\varphi_1(\t_i)}{\varphi_0(\t_i)}\right)\right)^r &\ls \sup_{z_k \le \t_i \le z_{k+1}} \left(\frac{\varphi_0(\t_i)}{\varphi_1(\t_i)}
\varphi\left( \frac{\varphi_1(\t_i)}{\varphi_0(\t_i)}\right)\right)^r
	\end{align}
	with constants independent of $\tau_k$ and $z_k$.
	\end{lem}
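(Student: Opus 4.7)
The strategy is to imitate the proof of Lemma~\ref{summax1}, reducing each sum to a geometric series whose ratio between consecutive even- (or odd-)indexed terms is bounded away from $1$. The crucial reduction is provided by the algebraic identities
\[
\varphi\!\left(\frac{\varphi_1(\t_i)}{\varphi_0(\t_i)}\right) = \frac{\varphi(\varphi_0,\varphi_1)(\t_i)}{\varphi_0(\t_i)}, \qquad \frac{\varphi_0(\t_i)}{\varphi_1(\t_i)}\,\varphi\!\left(\frac{\varphi_1(\t_i)}{\varphi_0(\t_i)}\right) = \frac{\varphi(\varphi_0,\varphi_1)(\t_i)}{\varphi_1(\t_i)},
\]
both immediate from the definition of $\varphi(\varphi_0,\varphi_1)$. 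They convert \eqref{sum<max3} and \eqref{sum<max4} into questions about ratios that couple the discretizing sequence $\{\t_k\}$ of $\varphi(\varphi_0,\varphi_1)$ with $\{\tau_k\}$ and $\{z_k\}$.

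For \eqref{sum<max3}, I fix $k$ and use the two-case split of Definition~\ref{ds} applied to $\varphi_0$: either $\varphi_0(\tau_{k+1})\le 2\varphi_0(\tau_k)$, or $\varphi_0(\tau_k)/\tau_k \le 2\varphi_0(\tau_{k+1})/\tau_{k+1}$. In the first subcase, for $\t_i,\t_{i+2}\in[\tau_k,\tau_{k+1}]$ the monotonicity of $\varphi_0$ gives $\varphi_0(\t_{i+2})\le 2\varphi_0(\t_i)$; combined with the strongly increasing property $\varphi(\varphi_0,\varphi_1)(\t_{i+2})\ge 4\varphi(\varphi_0,\varphi_1)(\t_i)$ and the first identity above, this yields $\varphi(\varphi_1(\t_{i+2})/\varphi_0(\t_{i+2}))\ge 2\,\varphi(\varphi_1(\t_i)/\varphi_0(\t_i))$. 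In the second subcase I invoke the strongly decreasing property of $\varphi(\varphi_0,\varphi_1)(\t_k)/\t_k$ together with $\varphi_0(\t_i)/\t_i\le 2\varphi_0(\t_{i+2})/\t_{i+2}$ (which follows from the subcase hypothesis since $\varphi_0/t$ is non-increasing) to obtain the reverse inequality $\varphi(\varphi_1(\t_{i+2})/\varphi_0(\t_{i+2}))\le \tfrac{1}{2}\,\varphi(\varphi_1(\t_i)/\varphi_0(\t_i))$. In either subcase, along the even and odd sub-indices of $i$ the quantity $\varphi((\varphi_1/\varphi_0)(\t_i))^r$ is geometric with ratio at most $2^{-|r|}$ or at least $2^{|r|}$, so the whole sum is dominated by a geometric series whose value is $\ls \sup \varphi((\varphi_1/\varphi_0)(\t_i))^r$.

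The proof of \eqref{sum<max4} is symmetric: replace $\{\tau_k\}$ and $\varphi_0$ by $\{z_k\}$ and $\varphi_1$ throughout and use the second identity, so the relevant ratios become $\varphi(\varphi_0,\varphi_1)(\t_i)/\varphi_1(\t_i)$. The same two-case analysis produces a factor-$2$ change of the summand between every two indices, and the geometric series bound is identical.

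The main obstacle is bookkeeping: one must carefully track in each subcase which direction the geometric factor moves, separately handle the sign of $r$, and split the range into even and odd subsequences before summing. Conceptually, however, the result is a direct corollary of the two algebraic identities, which reduce this lemma to the pattern already established in Lemma~\ref{summax1}.
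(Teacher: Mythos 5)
Your proof is correct and rests on exactly the same mechanism as the paper's: the case split \eqref{Z_1}/\eqref{Z_2} for the $\varphi_0$-discretizing sequence, near-constancy of $\varphi_0(\t_i)$ (resp. $\varphi_0(\t_i)/\t_i$) on $[\tau_k,\tau_{k+1}]$, and the strong monotonicity of $\varphi(\varphi_0,\varphi_1)(\t_i)$ (resp. $\varphi(\varphi_0,\varphi_1)(\t_i)/\t_i$), all tied together by the identity $\varphi(\varphi_1/\varphi_0)=\varphi(\varphi_0,\varphi_1)/\varphi_0$. The only difference is cosmetic: you establish a two-step geometric growth inequality for the summand directly (in the style of Lemma~\ref{summax1}), whereas the paper inserts a nearly-constant factor $\varphi_0(\t_i)^r$ or $(\varphi_0(\t_i)/\t_i)^r$ to turn the sum into one over $\varphi(\varphi_0,\varphi_1)(\t_i)^r$ before invoking the geometric bound and removing the factor.
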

\begin{proof}
Let us fix $k$. As it was mentioned during the course of the proof of Lemma~\ref{summax1}.
we have two cases either  \eqref{Z_1} or   \eqref{Z_2}.
If \eqref{Z_1} holds we get 
  \begin{align*}
	\sum_{\tau_k \le \t_i \le \tau_{k+1}} \varphi\left(\frac{\varphi_1(\t_i)}{\varphi_0(\t_i)}\right)^r 
	&\ls \frac{1}{\varphi_0(\tau_k)^r} 
		\sum_{\tau_k \le \t_i \le \tau_{k+1}} \varphi_0(\t_i)^r\varphi\left(\frac{\varphi_1(\t_i)}{\varphi_0(\t_i)}\right)^r\\
	&\ls \frac{1}{\varphi_0(\tau_k)^r} 
		\sup_{\tau_k \le \t_i \le \tau_{k+1}} \varphi_0(\t_i)^r\varphi\left(\frac{\varphi_1(\t_i)}{\varphi_0(\t_i)}\right)^r \\
	&\ls \sup_{\tau_k \le \t_i \le \tau_{k+1}} \varphi\left(\frac{\varphi_1(\t_i)}{\varphi_0(\t_i)}\right)^r.	
\end{align*}
If \eqref{Z_2} holds we get
  \begin{align*}
	\sum_{\tau_k \le \t_i \le \tau_{k+1}} \varphi\left(\frac{\varphi_1(\t_i)}{\varphi_0(\t_i)}\right)^r 
	&\ls \left(\frac{\tau_k}{\varphi_0(\tau_k)}\right)^r 
		\sum_{\tau_k \le \t_i \le \tau_{k+1}} \left(\frac{\varphi_0(\t_i)}{\t_i}\right)^r
		\varphi\left(\frac{\varphi_1(\t_i)}{\varphi_0(\t_i)}\right)^r\\
	&\ls \left(\frac{\tau_k}{\varphi_0(\tau_k)}\right)^r 
		\sup_{\tau_k \le \t_i \le \tau_{k+1}} \left(\frac{\varphi_0(\t_i)}{\t_i}\right)^r
		\varphi\left(\frac{\varphi_1(\t_i)}{\varphi_0(\t_i)}\right)^r \\
	&\ls \sup_{\tau_k \le \t_i \le \tau_{k+1}} \varphi\left(\frac{\varphi_1(\t_i)}{\varphi_0(\t_i)}\right)^r.	
\end{align*}
The proof of  the estimate \eqref{sum<max3} is complete.
The proof of \eqref{sum<max4} is similar and we omit the details.
\end{proof}

\begin{lem} \label{kfunc}Let $q,p \in[1,\infty]$. Let $\{ \tau_k\}$  be  discretizing sequence for $\varphi_0$ and Let
 $\{z_k\}$ be  discretizing sequence for $\varphi_1$. 
 Let $M_k^0 =\{ i: \tau_k \le \t_i\le \tau_{k+1}\}$ and  
$M_k^1 =\{ i: z_k \le \t_i\le z_{k+1}\}$. 
Denote  sets 
$ \Omega_t=\{ y:  \frac{\varphi_1(y)}{\varphi_0(y)}\le t\}$ and 
$ \Omega_t^c=\{ y:  \frac{\varphi_1(y)}{\varphi_0(y)}> t\}$. Then
\begin{align*}
 &K\left(\{a_k\},t; l_p\left(l_q^{M_k^0}\left(\frac{1}{\varphi_0(\t_i)}\right)\right), l_p\left(l_q^{M_k^1}\left(\frac{1}{\varphi_1(\t_i)}\right)\right)
    \right)\\
&\hskip+1cm\approx
\|a_i\chi_{\Omega_t}(\t_i)\|_{l_p\left(l_q^{M_k^0}\left(\frac{1}{\varphi_0(\t_i)}\right)\right)}+t\|a_i\chi_{\Omega_t^c}(\t_i)\|_{l_p\left(l_q^{M_k^1}\left(\frac{1}{\varphi_1(\t_i)}\right)\right)}.
\end{align*}
\end{lem}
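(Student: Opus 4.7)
The plan is to prove the two-sided estimate by handling the upper and lower bounds separately. For the upper bound, the specific decomposition $a_i = a_i\chi_{\Omega_t}(\t_i) + a_i\chi_{\Omega_t^c}(\t_i)$ has disjoint supports, and since multiplication by an indicator is contractive on the weighted $l_p(l_q)$-spaces $X_0$ and $X_1$, this immediately yields
\[
K(\{a_k\},t;X_0,X_1) \le \|a\chi_{\Omega_t}\|_{X_0} + t\|a\chi_{\Omega_t^c}\|_{X_1},
\]
which is exactly the right-hand side of the claimed equivalence. This direction is essentially free.

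For the lower bound I would fix an arbitrary admissible decomposition $a=b+c$ and show that each of the two terms on the right is controlled by $\|b\|_{X_0}+t\|c\|_{X_1}$. The triangle inequality gives
\[
\|a\chi_{\Omega_t}\|_{X_0}\le\|b\|_{X_0}+\|c\chi_{\Omega_t}\|_{X_0},\quad t\|a\chi_{\Omega_t^c}\|_{X_1}\le t\|c\|_{X_1}+t\|b\chi_{\Omega_t^c}\|_{X_1},
\]
and the pointwise bounds $1/\varphi_0(\t_i)\le t/\varphi_1(\t_i)$ on $\Omega_t$ and $t/\varphi_1(\t_i)<1/\varphi_0(\t_i)$ on $\Omega_t^c$ convert both remaining error terms into comparisons of the same sequence in two $l_p(l_q)$ norms, one with partition $M^0$ and the other with partition $M^1$.

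The main obstacle, therefore, is to verify that
\[
\|c\chi_{\Omega_t}/\varphi_1\|_{l_p(l_q^{M^0})}\ls\|c/\varphi_1\|_{l_p(l_q^{M^1})}
\]
and its symmetric counterpart $\|b\chi_{\Omega_t^c}/\varphi_0\|_{l_p(l_q^{M^1})}\ls\|b/\varphi_0\|_{l_p(l_q^{M^0})}$, since a priori the two partitions are inequivalent. To handle this I would reuse the case analysis developed in the proof of the preceding lemma: inside any block $M_j^0$ one is either in case \eqref{Z_1} (so $\varphi_0$ is essentially constant on the block) or in case \eqref{Z_2} (so $\varphi_0(\t)/\t$ is essentially constant), and in either case the ratio $\varphi_1/\varphi_0$ is strongly monotone along the $\t_i$'s in that block. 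This forces $\Omega_t\cap M_j^0$ to be a contiguous prefix or suffix of $M_j^0$ on which $\varphi_1(\t_i)$ is geometrically spaced, so a geometric-series estimate collapses the inner $l_q^{M_j^0\cap\Omega_t}(1/\varphi_1)$-sum to a uniformly bounded number of dominant terms. Each such dominant term lies in a definite $M_k^1$-block, and the resulting correspondence $j\mapsto k(j)$ carries bounded multiplicity, so summing in $j$ recovers $\|c/\varphi_1\|_{l_p(l_q^{M^1})}$. The symmetric argument, interchanging the roles of $\varphi_0,\varphi_1$ and of $\Omega_t,\Omega_t^c$ and using the discretizing sequence $\{z_k\}$ in place of $\{\tau_k\}$, handles the other estimate and completes the plan.
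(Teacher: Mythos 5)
Your upper bound is fine and is exactly the paper's. The lower bound, however, hinges on a false intermediate reduction. After invoking the pointwise bound $1/\varphi_0(\t_i)\le t/\varphi_1(\t_i)$ on $\Omega_t$ you are left with the claim
\[
\|c\chi_{\Omega_t}/\varphi_1\|_{l_p(l_q^{M^0})}\ \ls\ \|c/\varphi_1\|_{l_p(l_q^{M^1})},
\]
but this comparison of two mixed norms of the same sequence under incomparable partitions is simply not true in general. Take $q=1$, $p=\infty$, and any block $M_k^0$ whose intersection with $\Omega_t$ meets $N$ different $M_j^1$ blocks; choosing $c$ with $|c_i|/\varphi_1(\t_i)$ equal to one fixed value at one index of each of those $N$ sub-blocks makes the left side $N$ times the right side. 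Restriction to $\Omega_t$ does not prevent a $\varphi_0$-block from straddling arbitrarily many $\varphi_1$-blocks, so the constant is unbounded. The pointwise bound has thrown away the one piece of structure that saves the argument, namely the weight $\varphi_1(\t_i)/\varphi_0(\t_i)$, which by Lemma~\ref{summax1} decays geometrically along each $M^0$ block; this is precisely what the paper retains. The paper writes $\left(|c_i|/\varphi_0(\t_i)\right)^q = \left(\varphi_1(\t_i)/\varphi_0(\t_i)\right)^q\left(|c_i|/\varphi_1(\t_i)\right)^q$, sups out $|c_i|/\varphi_1$ over the block (dropping from $l_q$ to $l_\infty$), sums the geometrically decaying weight via Lemma~\ref{summax1}, then re-collects over $M^1$ blocks and recovers $l_q$ from $l_\infty$. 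Your plan instead loses the weight and then tries to save the day by claiming $\varphi_1(\t_i)$ is geometrically spaced inside $M_j^0\cap\Omega_t$; this is a second, separate error. In the case~\eqref{Z_2} ($\varphi_0(\t)/\t$ essentially constant on the block) only $\varphi_1(\t)/\t$, equivalently $\varphi_1/\varphi_0$, is geometrically decaying, while $\varphi_1$ itself is merely non-decreasing and may increase arbitrarily slowly. And even when $\varphi_1$ is geometric (case~\eqref{Z_1}), an arbitrary $c_i$ means $\sum_{M_j^0\cap\Omega_t}(|c_i|/\varphi_1(\t_i))^q$ does \emph{not} collapse to finitely many dominant terms; only the weight sum collapses. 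Both issues send you back to the paper's factorization, which avoids the premature use of the pointwise bound.
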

\begin{proof} Let us  consider sequences  $b_i=a_i\chi_{\Omega_t}(\t_i)$
and $c_i=a_i\chi_{\Omega_t^c}(\t_i)$. Hence  $a_i=b_i+c_i$.
Using the definition of $K$-functional it easy to see that 
 \begin{align*}
     & K\left(\{a_k\},t; l_p\left(l_q^{M_k^0}\left(\frac{1}{\varphi_0(\t_i)}\right)\right), l_p\left(l_q^{M_k^1}\left(\frac{1}{\varphi_1(\t_i)}\right)\right)
    \right)\\
    &\hskip+1cm\ls
 \left(\sum_{k\in\Z}\left(
\sum_{\t_i\in M_k^0, \frac{\varphi_1(\t_i)}{\varphi_0(\t_i)}\le t}
\left(\frac{|a_i|}{\varphi_0(\t_i)}\right)^q\right)^{\frac{p}{q}}\right)^{1/p}  \\
  &\hskip+1.5cm +
 t \left(\sum_{k\in\Z}\left(
\sum_{\t_i\in M_k^1, \frac{\varphi_1(\t_i)}{\varphi_1(\t_i)}> t}
\left(\frac{|a_i|}{\varphi_1(\t_i)}\right)^q\right)^{\frac{p}{q}}\right)^{1/p}\\
&\hskip+1cm =\|a_i\chi_{\Omega_t}(\t_i)\|_{l_p\left(l_q^{M_k^0}\left(\frac{1}{\varphi_0(\t_i)}\right)\right)}+t\|a_i\chi_{\Omega_t^c}(\t_i)\|_{l_p\left(l_q^{M_k^1}\left(\frac{1}{\varphi_1(\t_i)}\right)\right)}.
\end{align*}
So we have obtained an upper bound for the $K$-functional.

Let
$\{a_k\}\in l_p  \left(l_q^{M_k^0}\left(\frac{1}{\varphi_0(\t_i)}\right)\right)+ l_p\left(l_q^{M_k^1}\left(\frac{1}{\varphi_1(\t_i)}\right)\right)$ and let us  consider  any representation 
$a_i=b_i+c_i$, $i\in Z$ with $\{b_i\}\in l_p\left(l_q^{M_k^0}\left(\frac{1}{\varphi_0(\t_i)}\right)\right)$ and $ \{c_i \}\in l_p\left(l_q^{M_k^1}\left(\frac{1}{\varphi_1(\t_i)}\right)\right)$.
By using estimates \eqref{sum<max1} and \eqref{sum<max2} we obtain 
\begin{align*}
&\left(\sum_{k\in\Z}\left(
\sum_{\t_i\in M_k^0, \frac{\varphi_1(\t_i)}{\varphi_0(\t_i)}\le t}
\left(\frac{|c_i|}{\varphi_0(\t_i)}\right)^q\right)^{\frac{p}{q}}\right)^{1/p} \\
 &\hskip+1cm\ls  \left(\sum_{k\in\Z}
\left(
\sum_{\t_i\in M_k^0, \frac{\varphi_1(\t_i)}{\varphi_0(\t_i)}\le t}
\left(\frac{\varphi_1(\t_i)}{\varphi_0(\t_i)}\right)^q\right)^{\frac{p}{q}}
\left(\sup_{\t_i\in M_k^0}
\frac{|c_i|}{\varphi_1(\t_i)}\right)^p\right)^{1/p}
\\
 &\hskip+1cm \ls
  \left(\sum_{k\in\Z}\sum_{\tau_j\in M_k^1}
\left(
\sum_{\t_i\in M_j^0, \frac{\varphi_1(\t_i)}{\varphi_0(\t_i)}\le t}
\frac{\varphi_1(\t_i)}{\varphi_0(\t_i)}\right)^{p}
\left(\sup_{\t_i\in M_j^0}
\frac{|c_i|}{\varphi_1(\t_i)}\right)^p \right)^{1/p}\\
&\hskip+1cm\ls
 \left(\sum_{k\in\Z}\sum_{\tau_j\in M_k^1}
\left(
\sup_{\t_i\in M_j^0, \frac{\varphi_1(\t_i)}{\varphi_0(\t_i)}\le t}
\frac{\varphi_1(\t_i)}{\varphi_0(\t_i)}\right)^{p}
\left(\sup_{\t_i\in M_j^0}
\frac{|c_i|}{\varphi_1(\t_i)}\right)^p \right)^{1/p}\\
&\hskip+1cm\ls t
 \left(\sum_{k\in\Z}\sup_{\tau_j\in M_k^1}
\left(\sup_{\t_i\in M_j^0}
\frac{|c_i|}{\varphi_1(\t_i)}\right)^p \right)^{1/p}\\
 &\hskip+1cm \ls 
  t \left(\sum_{k\in\Z}
\left(\sum_{\tau_i\in M_k^1}
\left(\frac{|c_i|}{\varphi_1(\t_i)}\right)^q\right)^{p/q}\right)^{1/p}.
\end{align*} 
Therefore, we get 
\begin{align}  \label{I}
&\|a_i\chi_{\Omega_t}(\t_i)\|_{l_p\left(l_q^{M_k^0}\left(\frac{1}{\varphi_0(\t_i)}\right)\right)} \\
&\hskip+1cm\ls  
\|b_i\|_{l_p\left(l_q^{M_k^0}\left(\frac{1}{\varphi_0(\t_i)}\right)\right)}+t  \|c_i\|_{l_p\left(l_q^{M_k^1}\left(\frac{1}{\varphi_0(\t_i)}\right)\right)}.\notag
\end{align}

Similarly by using  estimates \eqref{sum<max2} and \eqref{sum<max1} we obtain 
\begin{align*}
&\left(\sum_{k\in\Z}\left(
\sum_{\t_i\in M_k^1, \frac{\varphi_1(\t_i)}{\varphi_1(\t_i)}> t}
\left(\frac{|b_i|}{\varphi_1(\t_i)}\right)^q\right)^{\frac{p}{q}}\right)^{1/p} \\
&\hskip+1cm \ls t\left(\sum_{k\in\Z}
\left(
\sum_{\t_i\in M_k^1, \frac{\varphi_1(\t_i)}{\varphi_0(\t_i)}> t}
\left(\frac{\varphi_0(\t_i)}{\varphi_1(\t_i)}\right)^q\right)^{\frac{p}{q}}
\left(\sup_{\t_i\in M_k^1}
\frac{|b_i|}{\varphi_0(\t_i)}\right)^p\right)^{1/p}
\\
  &\hskip+1cm\ls t\left(\sum_{k\in\Z} 
  \sum_{z_j\in M_k^0}
\left(
\sum_{\t_i\in M_j^1, \frac{\varphi_1(\t_i)}{\varphi_0(\t_i)} > t}
\frac{\varphi_0(\t_i)}{\varphi_1(\t_i)}\right)^{p}
\left(\sup_{\t_i\in M_j^1}
\frac{|b_i|}{\varphi_0(\t_i)}\right)^p \right)^{1/p}\\
 &\hskip+1cm\ls t\left(\sum_{k\in\Z}  \sum_{z_j\in M_k^0}
\left(
\sup_{\t_i\in M_j^1, \frac{\varphi_0(\t_i)}{\varphi_1(\t_i)}< 1/t}
\frac{\varphi_0(\t_i)}{\varphi_1(\t_i)}\right)^{p}
\left(\sup_{\t_i\in M_j^1}
\frac{|b_i|}{\varphi_0(\t_i)}\right)^p \right)^{1/p}\\
 &\hskip+1cm\ls \left(\sum_{k\in\Z}  \sup_{z_j\in M_k^0}
\left(\sup_{\t_i\in M_j^1}
\frac{|b_i|}{\varphi_0(\t_i)}\right)^p \right)^{1/p}\\
 &\hskip+1cm\ls  \left(\sum_{k\in\Z}
\left(\sup_{\t_i\in M_k^0}
\left(\frac{|b_i|}{\varphi_0(\t_i)}\right)^{q}\right)^{p/q}\right)^{1/p}.
\end{align*} 
Hence
\begin{align}\label{II}
 &t\|a_i\chi_{\Omega_t^c}(\t_i)\|_{l_p\left(l_q^{M_k^1}\left(\frac{1}{\varphi_1(\t_i)}\right)\right)} \\
&\hskip+1cm\ls 
\|b_i\|_{l_p\left(l_q^{M_k^0}\left(\frac{1}{\varphi_0(\t_i)}\right)\right)}+t  \|c_i\|_{l_p\left(l_q^{M_k^1}\left(\frac{1}{\varphi_0(\t_i)}\right)\right)}.\notag
\end{align}
Combining the estimates \eqref{I} and  \eqref{II} we  get
\begin{align*}
&\|a_i\chi_{\Omega_t}(t_i)\|_{l_p\left(l_q^{M_k^0}\left(\frac{1}{\varphi_0(\t_i)}\right)\right)}+t\|a_i\chi_{\Omega_t^c}(t_i)\|_{l_p\left(l_q^{M_k^1}\left(\frac{1}{\varphi_1(\t_i)}\right)\right)}\\
&\hskip+1cm \ls \|b_i\|_{l_p\left(l_q^{M_k^0}\left(\frac{1}{\varphi_0(\t_i)}\right)\right)}+ t\|c_i\|_
{l_p\left(l_q^{M_k^1}\left(\frac{1}{\varphi_1(\t_i)}\right)\right)}.
\end{align*}  
If we  take the infimum over all the representations $a_i=b_i+c_i$ we obtain the desired lower bound estimate for the $K$-functional.
\end{proof}

\begin{lem}\label{lppp} Let $q,p \in[1,\infty]$. Then
\begin{equation} \label{ppp}
((\l_q)_{\varphi_0,p},(\l_q)_{\varphi_1,p})_{\varphi,p}=l_p\left(\frac{1}{\varphi(\varphi_0,\varphi_1)(\t_i)}\right).
\end{equation}
\end{lem}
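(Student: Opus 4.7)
The strategy is to combine the two explicit descriptions already at hand. Corollary~\ref{cor2} identifies $V_0:=(\l_q)_{\varphi_0,p}=l_p(l_q^{M_k^0}(1/\varphi_0(\t_i)))$ and $V_1:=(\l_q)_{\varphi_1,p}=l_p(l_q^{M_k^1}(1/\varphi_1(\t_i)))$, while Lemma~\ref{kfunc} provides the $K$-functional
$$
K(t,a;V_0,V_1)\approx \|a\chi_{\Omega_t}\|_{V_0}+t\,\|a\chi_{\Omega_t^c}\|_{V_1}.
$$
Inserting this into the discretized norm $\|a\|_{(V_0,V_1)_{\varphi,p}}^p\approx\sum_m K(t_m,a)^p/\varphi(t_m)^p$ and using $(x+y)^p\approx x^p+y^p$ splits the $p$-th power of the norm into two sums, one for each summand of the $K$-functional.

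Next, using the strong monotonicity $\varphi(t_m)\uparrow\uparrow$ and $\varphi(t_m)/t_m\downarrow\downarrow$ forced by the discretizing sequence, one has $\sum_{m\ge m_0}\varphi(t_m)^{-p}\approx\varphi(t_{m_0})^{-p}$ and $\sum_{m\le m_0}(t_m/\varphi(t_m))^p\approx(t_{m_0}/\varphi(t_{m_0}))^p$; this discrete Hardy-type tool (already used in the proof of Lemma~\ref{gg}) reduces each of the two sums to a sum over the slabs $N_m:=\{i:t_{m-1}<\sigma_i\le t_m\}$, where $\sigma_i:=\varphi_1(\t_i)/\varphi_0(\t_i)$, producing
$$
\|a\|_{(V_0,V_1)_{\varphi,p}}^p\approx\sum_m\frac{1}{\varphi(t_m)^p}\bigl(\|a\chi_{N_m}\|_{V_0}^p+t_m^p\|a\chi_{N_m}\|_{V_1}^p\bigr).
$$
Because $\sigma_i\approx t_m$ on $N_m$, one has $\varphi_1(\t_i)\approx t_m\varphi_0(\t_i)$ and $\varphi(\varphi_0,\varphi_1)(\t_i)\approx\varphi_0(\t_i)\varphi(t_m)$, so the factor $t_m^p$ is absorbed into the second norm and both inner block-sums acquire the common weight $1/\varphi_0(\t_i)$.

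The step I expect to be the main obstacle is the slab-wise identity
$$
\sum_k\Bigl(\sum_{i\in M_k^0\cap N_m}\bigl(\tfrac{|a_i|}{\varphi_0(\t_i)}\bigr)^q\Bigr)^{p/q}+\sum_k\Bigl(\sum_{i\in M_k^1\cap N_m}\bigl(\tfrac{|a_i|}{\varphi_0(\t_i)}\bigr)^q\Bigr)^{p/q}\approx\sum_{i\in N_m}\Bigl(\frac{|a_i|}{\varphi_0(\t_i)}\Bigr)^p,
$$
because for $p\neq q$ the inner $l_q$-block norms are genuinely different from an $l_p$-norm. The resolution uses the geometric distribution of $\sigma_i$ within each $\tau_k$- and each $z_k$-block that was shown during the proof of Lemma~\ref{summax1}: consecutive ratios satisfy $\sigma_{i+2}/\sigma_i\ge 3/2$ or $\le 2/3$, forcing the two partitions $\{M_k^0\cap N_m\}$ and $\{M_k^1\cap N_m\}$ to separate the indices of $N_m$ finely enough that the geometric-series bounds \eqref{sum<max1}--\eqref{sum<max4} make each inner block-sum comparable to its leading term. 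Summing the resulting slab identity over $m$ then produces $\sum_i|a_i|^p/\varphi(\varphi_0,\varphi_1)(\t_i)^p$, which is \eqref{ppp}.
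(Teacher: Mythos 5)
Your starting point matches the paper's: both use Corollary~\ref{cor2} to identify $V_0,V_1$ and Lemma~\ref{kfunc} for the $K$-functional. But the routes then diverge sharply. The paper first reduces to the case $q\in\{1,\infty\}$ by the sandwich $\l_1\subset\l_q\subset\l_\infty$ (and uses Corollary~\ref{cor3} for the already-solved case $q=p$), and then proves only the one non-trivial inclusion $(\l_\infty)$-version $\subset(\l_1)$-version by the explicit chain $I\ls\sum_i(|a_i|/\varphi(\varphi_0,\varphi_1)(\t_i))^p\ls III+IV$ and $II\ls III+IV$. Your attempt, by contrast, tries to compute the norm directly by a Hardy-type reduction to slab sums and a slab-wise norm identity, and I believe this route has two genuine gaps.

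First, the Hardy-type reduction itself does not go through for $p\neq q$. You treat $\|a\chi_{\{\sigma_i\le t_m\}}\|_{V_0}^p$ as if it were a running partial sum of the slab quantities $\|a\chi_{N_{m'}}\|_{V_0}^p$. That identity holds when $V_0=l_p$; but here $V_0=l_p(l_q^{M_k^0})$, and
\begin{equation*}
\|a\chi_{M_k^0\cap\{\sigma_i\le t_m\}}\|_{l_q}^p=\Bigl(\sum_{m'\le m}\sum_{i\in M_k^0\cap N_{m'}}\tfrac{|a_i|^q}{\varphi_0(\t_i)^q}\Bigr)^{p/q}
\end{equation*}
is not $\sum_{m'\le m}\|a\chi_{M_k^0\cap N_{m'}}\|_{l_q}^p$ unless $p=q$. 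So the passage from a cumulative $m$-sum to a slab-indexed sum, which is what makes the discrete Hardy inequality applicable, already needs justification for $p\neq q$.

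Second, the slab-wise identity you anticipate as the main obstacle does, in fact, fail in general: it would require the cardinalities $\operatorname{Card}(M_k^0\cap N_m)$ and $\operatorname{Card}(M_k^1\cap N_m)$ to be uniformly bounded so that the inner $l_q$- and $l_p$-block norms are comparable. The geometric separation from the proof of Lemma~\ref{summax1} only controls how fast $\sigma_i$ moves \emph{inside a block} ($\sigma_{i+2}/\sigma_i\ge 3/2$ or $\le 2/3$); it does not bound the \emph{width} of a slab, since the ratio $t_m/t_{m-1}$ of consecutive entries in the discretizing sequence of $\varphi$ is not uniformly bounded above in general. (If it were, you would essentially already have condition~(v) of Theorem~\ref{main}, which the unconditional Lemma~\ref{lppp} cannot presuppose.) Moreover, the geometric-series estimates \eqref{sum<max1}--\eqref{sum<max4} bound sums of the \emph{fixed} weights $\sigma_i^r$ by their suprema; they say nothing about sums of an arbitrary block $\{|a_i|\}$, and hence cannot be used to equate the block $l_q$-norm with its leading term, which is what the step requires. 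These two issues are precisely what the paper's detour through the $q=1$ versus $q=\infty$ endpoints is designed to avoid.
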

\begin{proof}
It is enough to show that
\begin{equation}\label{ppp1}
\left((\l_1)_{\varphi_0,p},(\l_1)_{\varphi_1,p}\right)_{\varphi,p}=
\left((\l_\infty)_{\varphi_0,p},(\l_\infty)_{\varphi_1,p}\right)_{\varphi,p}.
 \end{equation}  
 Indeed, from the embeddings 
 $$ l_1\subset  l_q \subset l_\infty \quad \text{and} \quad l_1\left(\frac 1{\t_k}\right)\subset  l_q \left(\frac 1{\t_k}\right)\subset l_\infty\left(\frac 1{\t_k}\right), $$
  we get 
  \begin{align}\label{l1lq}
\left((\l_1)_{\varphi_0,p},(\l_1)_{\varphi_1,p}\right)_{\varphi,p}&\subset
\left((\l_q)_{\varphi_0,p},(\l_q)_{\varphi_1,p}\right)_{\varphi,p}\\
 &\subset  
\left((\l_\infty)_{\varphi_0,p},(\l_\infty)_{\varphi_1,p}\right)_{\varphi,p},\notag
 \end{align}  
 consequently, using \eqref{ppp1}, \eqref{l1lq}, and Corollary~\ref{cor3}, we obtain
 \begin{align*}
  \left((\l_q)_{\varphi_0,p},(\l_q)_{\varphi_1,p}\right)_{\varphi,p}
  &=
  \left((\l_p)_{\varphi_0,p},(\l_p)_{\varphi_1,p}\right)_{\varphi,p}\\
  &=l_p\left(\frac{1}{\varphi(\varphi_0,\varphi_1)(\t_i)}\right).
  \end{align*}
  To show \eqref{ppp1} we only need to prove the embedding
\begin{equation}\label{ppple}
\left((\l_\infty)_{\varphi_0,p},(\l_\infty)_{\varphi_1,p}\right)_{\varphi,p}\subset
\left((\l_1)_{\varphi_0,p},(\l_1)_{\varphi_1,p}\right)_{\varphi,p}.
 \end{equation}  
By Lemma~\ref{kfunc}
 \begin{align*}
&\|\{a_i\}\|_{\left((\l_1)_{\varphi_0,p},(\l_1)_{\varphi_1,p}\right)_{\varphi,p}}^p\\
 &\hskip+1cm \approx \sum_{j\in\Z}
   \sum_{k\in\Z}\left(
\sum_{\t_i\in M_k^0, \frac{\varphi_1(\t_i)}{\varphi_0(\t_i)}\le t_j}
\frac{|a_i|}{\varphi_0(\t_i)}\right)^{p} \frac{1}{\varphi(t_j)^p}\\
&\hskip+1.5cm +\sum_{j\in\Z}
\sum_{k\in\Z}\left(
\sum_{\t_i\in M_k^1, \frac{\varphi_1(\t_i)}{\varphi_0(\t_i)}\ge t_j}
\frac{|a_i|}{\varphi_1(\t_i)}\right)^{p} \left(\frac{t_j}{\varphi(t_j)}\right)^p=I+II,
\end{align*}  
and 
 \begin{align*}
&\|\{a_i\}\|_{\left((\l_\infty)_{\varphi_0,p},(\l_\infty)_{\varphi_1,p}\right)_{\varphi,p}}^p\\
 &\hskip+1cm \approx \sum_{j\in\Z}
   \sum_{k\in\Z}\left(
\sup_{\t_i\in M_k^0, \frac{\varphi_1(\t_i)}{\varphi_0(\t_i)}\le t_j}
\frac{|a_i|}{\varphi_0(\t_i)}\right)^{p} \frac{1}{\varphi(t_j)^p}\\
&\hskip+1.5cm +\sum_{j\in\Z}
\sum_{k\in\Z}\left(
\sup_{\t_i\in M_k^1, \frac{\varphi_1(\t_i)}{\varphi_0(\t_i)}\ge t_j}
\frac{|a_i|}{\varphi_1(\t_i)}\right)^{p} \left(\frac{t_j}{\varphi(t_j)}\right)^p=III+IV.
\end{align*}  
 Let  $A_k^n$ be defined $A_k^n:=\sup_{\t_i\in M_k^n} \frac{\varphi_1(\t_i)}{\varphi_0(\t_i)}, \quad n=0,1$. Using 
 \eqref{sum<max3} and  \eqref{sum<max3} we obtain
   \begin{align*}
I &\ls  
   \sum_{k\in\Z}
   \left(
\sum_{\t_i\in M_k^0, \frac{\varphi_1(\t_i)}{\varphi_0(\t_i)}\le t_j}
\frac{|a_i|}{\varphi_0(\t_i)}\right)^{p} \sum_{A_k^0 \le t_j} \frac{1}{\varphi(t_j)^p}\\
&\ls
 \sum_{k\in\Z}\sum_{\t_i\in M_k^0}
\left(\frac{|a_i|}{\varphi(\varphi_0,\varphi_1)(\t_i)}\right)^{p} \times\\
 &\hskip+2cm \times
\left(\sum_{\t_i\in M_k^0, \frac{\varphi_1(\t_i)}{\varphi_0(\t_i)}\le t_j}\varphi\left(\frac{\varphi_1(\t_i)}{\varphi_0(\t_i)}\right)^{p'}\right)^{p/p'}
\frac{1}{\varphi(A_k^0)^p}\\
&\ls
 \sum_{k\in\Z}\sum_{\t_i\in M_k^0}
\left(\frac{|a_i|}{\varphi(\varphi_0,\varphi_1)(\t_i)}\right)^{p} 
\left(\sup_{\t_i\in M_k^0}\varphi\left(\frac{\varphi_1(\t_i)}{\varphi_0(\t_i)}\right)\right)^{p} 
\frac{1}{\varphi(A_k^0)^p}\\
&\ls\sum_{k\in\Z}\sum_{\t_i\in M_k^0}
\left(\frac{|a_i|}{\varphi(\varphi_0,\varphi_1)(\t_i)}\right)^{p}
  \\
&=\sum_{j\in\Z_1} \sum_{k\in\Z}\sum_{\t_i\in M_k^0, t_j<\frac{\varphi_1(\t_i)}{\varphi_0(\t_i)}\le t_{j+1}}
\left(\frac{|a_i|}{\varphi(\varphi_0,\varphi_1)(\t_i)}\right)^{p}\\
&\hskip0.5cm +\sum_{j\in\Z_2} \sum_{k\in\Z}\sum_{\t_i\in M_k^1, t_j<\frac{\varphi_1(\t_i)}{\varphi_0(\t_i)}\le t_{j+1}}
\left(\frac{|a_i|}{\varphi(\varphi_0,\varphi_1)(\t_i)}\right)^{p}\\
&\ls \sum_{j\in\Z_1} \sum_{k\in\Z}\left(\sup_{\t_i\in M_k^0, t_j<\frac{\varphi_1(\t_i)}{\varphi_0(\t_i)}\le t_{j+1}}
\frac{|a_i|}{\varphi_0(\t_i)}\right)^{p}\times\\
&\hskip+2cm \times
\sum_{\t_i\in M_k^0, t_j<\frac{\varphi_1(\t_i)}{\varphi_0(\t_i)}\le t_{j+1}}
\varphi\left(\frac{\varphi_1(\t_i)}{\varphi_0(\t_i)}\right)^{-p}
\\
&+\hskip+0.5cm\sum_{j\in\Z_2} \sum_{k\in\Z}\left(\sup_{\t_i\in M_k^1, t_j<\frac{\varphi_1(\t_i)}{\varphi_0(\t_i)}\le t_{j+1}}
\frac{|a_i|}{\varphi_1(\t_i)}\right)^{p}\times\\
&\hskip+2cm \times
\sum_{\t_i\in M_k^0, t_j<\frac{\varphi_1(\t_i)}{\varphi_0(\t_i)}\le t_{j+1}}
\left(\frac{\varphi_0(\t_i)}{\varphi_1(\t_i)}\varphi\left(\frac{\varphi_1(\t_i)}{\varphi_0(\t_i)}\right)\right)^{-p}\\
 &\ls \sum_{j\in\Z_1} \sum_{k\in\Z}\left(\sup_{\t_i\in M_k^0, t_j<\frac{\varphi_1(\t_i)}{\varphi_0(\t_i)}\le t_{j+1}}
\frac{|a_i|}{\varphi_0(\t_i)}\right)^{p}
\sup_{ t_j<\frac{\varphi_1(\t_i)}{\varphi_0(\t_i)}\le t_{j+1}}
\varphi\left(\frac{\varphi_1(\t_i)}{\varphi_0(\t_i)}\right)^{-p}
\\
&\hskip+0.5cm +\sum_{j\in\Z_2} \sum_{k\in\Z}\left(\sup_{\t_i\in M_k^1, t_j<\frac{\varphi_1(\t_i)}{\varphi_0(\t_i)}\le t_{j+1}}
\frac{|a_i|}{\varphi_1(\t_i)}\right)^{p}\times\\
&\hskip+2cm \times\sup_{ t_j<\frac{\varphi_1(\t_i)}{\varphi_0(\t_i)}\le t_{j+1}}
\left(\frac{\varphi_0(\t_i)}{\varphi_1(\t_i)}\varphi\left(\frac{\varphi_1(\t_i)}{\varphi_0(\t_i)}\right)\right)^{-p}\\
&\ls \sum_{j\in\Z_1} \sum_{k\in\Z}\left(\sup_{\t_i\in M_k^0, t_j<\frac{\varphi_1(\t_i)}{\varphi_0(\t_i)}\le t_{j+1}}
\frac{|a_i|}{\varphi_0(\t_i)}\right)^{p}
\frac{1}{\varphi(t_j)^{p}}
\\
&\hskip+0.5cm +\sum_{j\in\Z_2} \sum_{k\in\Z}\left(\sup_{\t_i\in M_k^1, t_j<\frac{\varphi_1(\t_i)}{\varphi_0(\t_i)}\le t_{j+1}}
\frac{|a_i|}{\varphi_1(\t_i)}\right)^{p}
\left(\frac{t_j}{\varphi(t_j)}\right)^{p}
=III+ IV.
\end{align*} 
Similarly we see that
$$ II\ls III+ IV.$$
Finally, combining estimates we get \eqref{ppple}.
The proof is complete. 
\end{proof}

\begin{thm}   Let $p \in[1,\infty]$. Then
$$(\X_{\varphi_0,p},\X_{\varphi_1,p})_{\varphi,p}=\X_{\varphi(\varphi_0,\varphi_1),p}.
$$
\end{thm}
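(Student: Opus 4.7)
The plan is to deduce the identity from its sequence-space counterpart Lemma~\ref{lppp}, transferring to an arbitrary couple $\X$ via the orbit and coorbit descriptions recorded in \eqref{orb}. Write $Y:=\X_{\varphi(\varphi_0,\varphi_1),p}$, $Z:=(\X_{\varphi_0,p},\X_{\varphi_1,p})_{\varphi,p}$ and $F:=l_p(1/\varphi(\varphi_0,\varphi_1)(\t_k))$. The heavy lifting---the combinatorics of the discretizing sequences encoded in Lemmas~\ref{summax1}--\ref{kfunc}---has already been carried out inside Lemma~\ref{lppp}, so the remaining task is a clean functorial argument on top of it.

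For the inclusion $Y\subset Z$ I would use the Orb half of \eqref{orb}. Any $x\in Y$ admits a representation $x=\sum_j T_j a_j$ with $T_j:\l_1\to\X$ bounded and $\sum_j\|T_j\|_{\l_1\to\X}\,\|a_j\|_F<\infty$. Lemma~\ref{lppp} taken with $q=1$ identifies $F$ isomorphically with $((\l_1)_{\varphi_0,p},(\l_1)_{\varphi_1,p})_{\varphi,p}$. Pick, for each $t>0$, a near-optimal K-decomposition $a_j=b_j^t+c_j^t$ in this sequence couple. Applying $T_j$ and using exactness of the real method, so that $T_j$ sends $(\l_1)_{\varphi_i,p}$ boundedly into $\X_{\varphi_i,p}$ with the same norm (up to a universal constant), yields
\[
K\bigl(t,x;\X_{\varphi_0,p},\X_{\varphi_1,p}\bigr)\ls\sum_j\|T_j\|\,K\bigl(t,a_j;(\l_1)_{\varphi_0,p},(\l_1)_{\varphi_1,p}\bigr).
\]
Taking the $(\varphi,p)$-norm in $t$ and passing to the infimum over the representations of $x$ produces $\|x\|_Z\ls\|x\|_Y$.

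For the reverse inclusion $Z\subset Y$ I would use the Corb half of \eqref{orb}: it is enough to control $\sup\{\|Tx\|_F : T:\X\to\l_\infty,\ \|T\|\le 1\}$. Applying functoriality of the real method twice, any such $T$ descends to a bounded operator
\[
Z\;\longrightarrow\;\bigl((\l_\infty)_{\varphi_0,p},(\l_\infty)_{\varphi_1,p}\bigr)_{\varphi,p}
\]
whose norm is controlled independently of $T$. By Lemma~\ref{lppp} taken with $q=\infty$, the target is isomorphic to $F$, so $\|Tx\|_F\ls\|x\|_Z$ uniformly in $T$; invoking the Corb description in \eqref{orb} then places $x\in Y$ with $\|x\|_Y\ls\|x\|_Z$.

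The main obstacle is conceptual rather than computational: one has to be disciplined in invoking the two halves of \eqref{orb} in opposite directions, and in verifying that bounded operators between the base couples $\l_1\to\X$ and $\X\to\l_\infty$ automatically induce bounded operators between the corresponding $\varphi_i,p$-spaces with controlled norms (exactness of the real method). Once those standard points are in place, the proof is essentially a pull-back/push-forward across these operators, with Lemma~\ref{lppp} supplying the sequence-space identity that makes both computations close.
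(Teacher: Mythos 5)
Your proposal follows exactly the paper's route: use the orbit half of \eqref{orb} together with Lemma~\ref{lppp} (at $q=1$) and exactness of the real method to push $\X_{\varphi(\varphi_0,\varphi_1),p}$ into $(\X_{\varphi_0,p},\X_{\varphi_1,p})_{\varphi,p}$, and the coorbit half together with Lemma~\ref{lppp} (at $q=\infty$) for the reverse containment. The paper's proof is a compact chain of orbit/coorbit inclusions where you unpack the same steps concretely in terms of K-functionals, but the ingredients and logical structure are identical.
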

\begin{proof} Using \eqref{orb} and \eqref{ppp1} we get 
\begin{align*}
&\X_{\varphi(\varphi_0,\varphi_1),p} \\
 &\hskip+0.2cm =
\Orb\left(l_p\left(\frac{1}{\varphi(\varphi_0,\varphi_1)(\t_i)}\right): \; \l_1 \rightarrow \X    \right)\\
  &\hskip+0.2cm  \subset \Orb\left(l_p\left(\frac{1}{\varphi(\varphi_0,\varphi_1)(\t_i)}\right):\; \{(\l_1)_{\varphi_0,p},(\l_1)_{\varphi_1,p}\}\rightarrow
  \{\X_{\varphi_0,p},\X_{\varphi_1,p}\}\right)\\
 &\hskip+1cm \subset (\X_{\varphi_0,p},\X_{\varphi_1,p})_{\varphi,p}.
 \end{align*}
 Similarly,  using \eqref{orb} and \eqref{l1p} we get  
 \begin{align*}
 &(\X_{\varphi_0,p},\X_{\varphi_1,p})_{\varphi,p}\\
   &\hskip+0.2cm  \subset \Corb\left(l_p\left(\frac{1}{\varphi(\varphi_0,\varphi_1)(\t_i)}\right): \;   \{\X_{\varphi_0,p},\X_{\varphi_1,p}\} \rightarrow \{(\l_\infty)_{\varphi_0,p},(\l_\infty)_{\varphi_1,p}\}\right)\\
&\hskip+0.2cm \subset \Corb\left(l_p\left(\frac{1}{\varphi(\varphi_0,\varphi_1)(\t_i)}\right): \;  \X \rightarrow \l_\infty\right)\\
&\hskip+0.2cm =\X_{\varphi(\varphi_0,\varphi_1),p}.
\end{align*}
The proof is complete.
\end{proof}

\section{ Proof of Theorem~\ref{main}}
$\rm{(v)} \Rightarrow \rm{(iv)}$. If (v) holds, by Corollary~\ref{cor1} and \ref{cor2} we have 
\begin{align}\label{l1l1p}
\left((\l_1)_{\varphi_0,1},(\l_p)_{\varphi_1,p}\right)_{\varphi,p} &=
\left((\l_\infty)_{\varphi_0,1},(\l_\infty)_{\varphi_1,p}\right)_{\varphi,p} \\
&=
l_p\left(\frac{1}{\varphi(\varphi_0,\varphi_1)(\t_i)}\right).\notag
\end{align}
Using \eqref{orb} and \eqref{l1l1p}we get 
\begin{align*}
&\X_{\varphi(\varphi_0,\varphi_1),p} \\
 &\hskip+0.2cm =
\Orb\left(l_p\left(\frac{1}{\varphi(\varphi_0,\varphi_1)(\t_i)}\right): \; \l_1 \rightarrow \X    \right)\\
  &\hskip+0.2cm  \subset \Orb\left(l_p\left(\frac{1}{\varphi(\varphi_0,\varphi_1)(\t_i)}\right):\; \{(\l_1)_{\varphi_0,1},(\l_1)_{\varphi_1,1}\}\rightarrow
  \{\X_{\varphi_0,1},\X_{\varphi_1,1}\}\right)\\
 &\hskip+0.2cm \subset (\X_{\varphi_0,1},\X_{\varphi_1,1})_{\varphi,p}.
 \end{align*}
 Similarly,  using \eqref{orb} and \eqref{l1p} we get  
 \begin{align*}
 &(\X_{\varphi_0,\infty},\X_{\varphi_1,\infty})_{\varphi,p}\\
   & \subset \Corb\left(l_p\left(\frac{1}{\varphi(\varphi_0,\varphi_1)(\t_i)}\right): \;   \{\X_{\varphi_0,\infty},\X_{\varphi_1,\infty}\} \rightarrow \{(\l_\infty)_{\varphi_0,\infty},(\l_\infty)_{\varphi_1,\infty}\}\right)\\
&\hskip+0.2cm \subset \Corb\left(l_p\left(\frac{1}{\varphi(\varphi_0,\varphi_1)(\t_i)}\right): \; \X \rightarrow \l_\infty\right)=\X_{\varphi(\varphi_0,\varphi_1),p}.
\end{align*}
Since 
$$ (\X_{\varphi_0,1},\X_{\varphi_1,1})_{\varphi,p}
 \subset (\X_{\varphi_0,\infty},\X_{\varphi_1,\infty})_{\varphi,p},
$$
we obtain 
\begin{align*} 
(\X_{\varphi_0,p_0},\X_{\varphi_1,p_1})_{\varphi,p}
&= (\X_{\varphi_0,p_0},\X_{\varphi_1,p_1})_{\varphi,p}\\
&=(\X_{\varphi_0,\infty},\X_{\varphi_1,\infty})_{\varphi,p}\\
&=\X_{\varphi(\varphi_0,\varphi_1),p}.
\end{align*}
We now complete the  proof of the implication $\rm{(v)} \Rightarrow \rm{(iv)}$. 

The implications $ \rm{(iv)} \Rightarrow \rm{(ii)} \Rightarrow \rm{(i)}\Rightarrow \rm{(iii)}$ are clear. We will show the implication  $\rm{(iii)} \Rightarrow \rm{(v)}$.

Suppose that \eqref{stab} holds for the couple $\X=(L_1(0,\infty), L_\infty(0,\infty))$ and for some $p \in (0,\infty)$. As the couple
$(L_1(0,\infty), L_\infty(0,\infty))$  is complete  couple then \eqref{stab}  holds for any  couple (see \cite{BK}) and  therefore for the couple $\overline{l}_q=(l_q, l_q(\frac{1}{\t_k}))$.
By Corollary~\ref{cor1}, \eqref{lqp} and \eqref{ppp1} we get
\begin{align*}
l_p\left(l_q^{M_k}\left(\frac{1}{\varphi(\varphi_0,\varphi_1)(\t_i)}\right)\right)
&=\left((\l_q)_{\varphi_0,q},(\l_q)_{\varphi_1,q}\right)_{\varphi,p} \\ 
&=\left((\l_q)_{\varphi_0,p},(\l_q)_{\varphi_1,p}\right)_{\varphi,p}\\ 
&=(\l_q)_{\varphi(\varphi_0,\varphi_1),p}\\
&=l_p\left(\frac{1}{\varphi(\varphi_0,\varphi_1)(\t_i)}\right).
\end{align*}
It is easy to see that from here  that (v) follows.
The proof is complete.

\textbf{Acknowledgment:} I thank the referees for their valuable comments.

\end{document}